\renewcommand{\P}{\ensuremath{{\mathbb{P}}}}
\newcommand\isom{\cong}
\newcommand\GL{\operatorname{GL}}
\newcommand\bq{\begin{equation}}
  \newcommand\eq{\end{equation}}
\newtheorem{proposition}{Proposition}[section]
\newtheorem{theorem}[proposition]{Theorem}
\newtheorem{example}[proposition]{Example}
\newtheorem{lemma}[proposition]{Lemma}
\theoremstyle{definition}
\newtheorem*{nn-definition}{Definition}
\theoremstyle{remark}
\newtheorem{remark}[proposition]{Remark}
\numberwithin{equation}{section}
\newcommand{\cut}[1]{}
\newcommand\hidden[1]{}
\newcommand{\M}{\mathbf{M}}
\newcommand{\K}{\widehat{\mathcal{M}_{\mathbf{L}}}}
\newcommand{\PP}{\mathbb{P}}
\newcommand{\QQ}{\mathbb{Q}}
\newcommand{\RR}{\mathbb{R}}
\newcommand{\ZZ}{{\mathbb{Z}}}                                        %
\newcommand{\Bl}{\operatorname{Bl}}                                   
\definecolor{pal1}{RGB}{215,48,39}
\definecolor{pal2}{RGB}{252,141,89}
\definecolor{pal3}{RGB}{254,224,144}
\definecolor{pal4}{RGB}{145,191,219}
\definecolor{pal5}{RGB}{69,117,180}
\pgfplotsset{
  contour/label node code/.code={
    \node{\pgfmathprintnumber{#1}\,ms};
  }
}
\title{The geography of negative curves}
\author{Javier Gonz\'alez Anaya, Jos\'e Luis Gonz\'alez and Kalle Karu}
\address{
  J. Gonz\'alez-Anaya, Department of Mathematics, University of California, Riverside,
  Riverside, CA 92521, United States.  \newline \indent
  J.L. Gonz\'alez, Department of Mathematics, University of California, Riverside,
  Riverside, CA 92521, United States.  \newline \indent
  K. Karu,
  Department of Mathematics, University of British Columbia, 
  Vancouver, BC V6T1Z2, Canada.} 
\email{javiergo@ucr.edu, jose.gonzalez@ucr.edu, karu@math.ubc.ca}
\thanks{The second author was supported by a grant from the Simons Foundation (Award Number 710443) and by the UCR Academic Senate. The third author was supported by a NSERC Discovery grant.}
\begin{document}
\begin{abstract}
We study the Mori Dream Space (MDS) property for blowups of weighted projective planes at a general point and, more generally, blowups of toric surfaces defined by a rational plane triangle. The birational geometry of these varieties is largely governed by the existence of a negative curve in them, different from the exceptional curve of the blowup. 

We consider a parameter space of all rational triangles, and within this space we study how the negative curves and the MDS property vary. One goal of the article is to catalogue all known negative curves and show their location in the parameter space. In addition to the previously known examples we construct two new families of negative curves. One of them is, to our knowledge, the first infinite family of special negative curves. 

The second goal of the article is to show that the knowledge of negative curves in the parameter space often determines the MDS property.  We show that in many cases this is the only underlying mechanism responsible for the MDS property. 
\end{abstract}
\maketitle
\setcounter{tocdepth}{1} 




\section{Introduction}

We work over an algebraically closed field $k$ of characteristic zero.

Let us consider the blowup of a weighted projective plane $\PP(a,b,c)$ at a general point, and more generally the blowup of a projective toric surface $X_\Delta$ with Picard number $1$, again at a general point. The main problems we are studying are: (i) find the Mori cone of curves of this variety, and (ii) determine if the variety is a Mori Dream Space (MDS). While previous work has concentrated on constructing examples and non-examples of MDS, in this article we start a more systematic study of families of such varieties. 

Let us describe the setup. Let $\Delta$ be a rational plane triangle, defining a projective toric surface $X_\Delta$ with Picard number $1$, and let $X$ be the blowup of this surface at a general point $e$,
\[X= \Bl_e X_\Delta.\]
A negative curve $C$ in $X$ is an irreducible curve of non-positive self-intersection, $C\cdot C\leq 0$, different from the exceptional curve $E$ of the blowup. The classes of $C$ and $E$ generate the Mori cone of curves of $X$. Hence, finding such a negative curve $C$ determines the Mori cone as well as the dual nef cone. Moreover, if $C$ has strictly negative self-intersection, $C\cdot C<0$, then this $C$ is unique in $X$.

Once a negative curve $C$ in $X$ is known, there is one more step to deciding if $X$ is a MDS. By a result of Cutkosky \cite{Cutkosky}, $X$ is a MDS if and only if it contains another curve $D$ such that $C\cap D=\emptyset$. The class of the curve $D$ and the hyperplane class $H$ in $X_\Delta$ generate the nef cone of $X$.

Our aim in this article is to study all triangles $\Delta$ and the pairs of non-intersecting curves $C$ and $D$ in them. Our main object of interest is not the negative curve $C\subset X$, but the restriction $C^\circ$ of this curve to the torus $T\subseteq X_\Delta$. Different varieties $X$ may contain negative curves with the same restrictions to the torus. More precisely, a negative curve $C$ in $X$ is defined by an irreducible Laurent polynomial $\xi(x,y)$ vanishing to order $m$ at $e$ and with support in a triangle $\Delta$ with area $\leq \frac{m^2}{2}$. For each such $\xi$ we may deform the triangle $\Delta$ and hence possibly get many other triangles for which $\xi$ defines a negative curve. All these negative curves are obtained from the same curve $C^\circ$ using the operations of closure and strict transform.

We consider the space of all rational triangles $\Delta$, and in this space we consider the subset of triangles that support a negative curve defined by some fixed polynomial $\xi$.
Two triangles that differ by scaling and translation define the same toric variety and hence should be considered the same; we call such triangles parallel.
A natural choice of parameters for this space are the three slopes of $\Delta$. Without loss of generality we will restrict our attention to a slice of this parameter space and assume that all our triangles $\Delta$ have a horizontal base, left slope $s$, right slope $t$ and $0<s<t$. Figure~\ref{fig-plot} shows the $st$-plane with many diamond shaped regions. Each such region corresponds to a polynomial $\xi$ and consists of all triangles $\Delta_{s,t}$ for which $\xi$ defines a negative curve. 

\begin{figure}
  \centering
  \includegraphics[width=\textwidth]{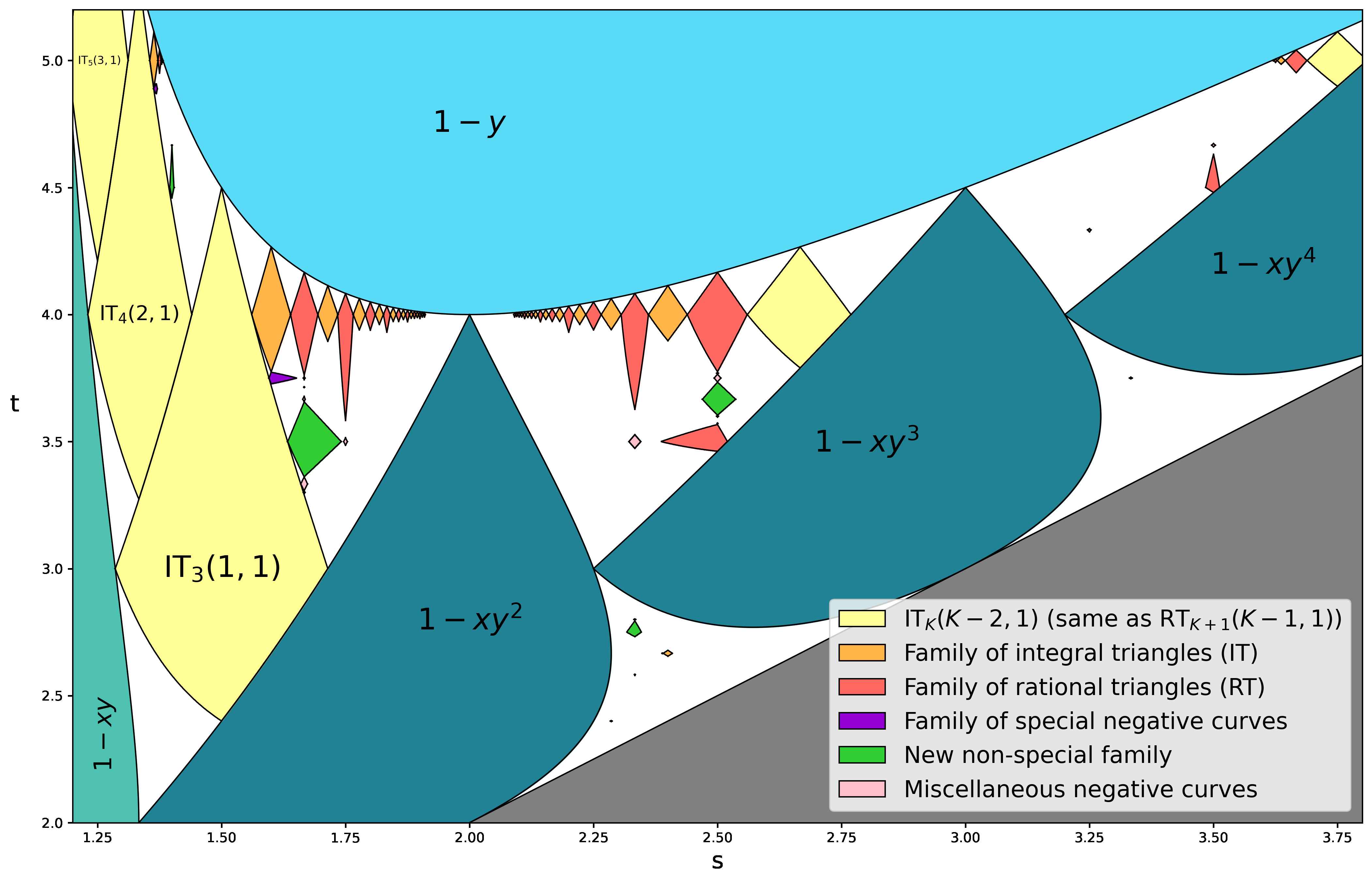}
  \caption{Negative curves in the parameter space of triangles.}
  \label{fig-plot}
\end{figure}

The rest of the article is about Figure~\ref{fig-plot}. It turns out that the location of the diamonds on the map explains many results and open problems in the field. We will briefly list some of these, with more details in subsequent sections.

\subsection{Infinite families of negative curves}

In our previous work \cite{GGK1, GGK2, GGK3}, we found infinite families of polynomials $\xi$ that define negative curves in some triangles. These families are unified in \cite{GGK3} as follows. For every integer $K\geq 4$, there are two infinite families of polynomials $\xi$ described by pairs of integers $(M_n, N_n)$, $n=1$,$2$,\ldots. The first family of polynomials is supported in the integral triangles $IT_K(M_n,N_n)$, the second family in rational triangles $RT_K(M_n,N_n)$. These negative curves can all be found on the map. The diamonds corresponding to $(K, M_n, N_n)$ have centres with coordinate $t=K$. Along each line $t=K$ the diamonds alternate: $IT_K(M_1,N_1)$, $RT_K(M_1, N_1)$, $IT_K(M_2,N_2)$, $RT_K(M_2, N_2)$, $\ldots$, $IT_K(N_2,M_2)$, $RT_K(N_2,M_2)$, $IT_K(N_1,M_1)$, $RT_K(N_1,M_1)$.

We will describe below a new infinite family of negative curves. For each $K\geq 4$ there is a triangle with slopes
\[ s=\frac{2K-3}{2K-5}, \quad t=K-\frac{1}{2} \]
that supports a negative curve with vanishing order 
\[ m= \left\lceil K^2-\frac{7}{2} K +2 \right\rceil.\]

We summarize in Table~\ref{tab-infinite} all known infinite families of negative curves. The family of special negative curves is described in the next subsection.

\bgroup
\def\arraystretch{1.5}
\begin{table}[ht]
  \begin{tabular}{| c | c | c | c |}
    \hline
    Name &  Order of vanishing $m$ & $s$ & $t$ \\ 
    \hline
    \hline 
    $IT_K(M,N)$ & $M+N$ & $\frac{NK}{M+N}$& $K$ \\
    $RT_K(M,N)$ & $M$ & $\frac{M+N}{M}$ & $K$ \\
    New non-special & $\lceil K^2-\frac{7}{2} K +2 \rceil$ & $\frac{2K-3}{2K-5}$ & $K-\frac{1}{2}$\\
    New special & $(K-3)(K-1)^2$ & $\frac{K(K-2)}{(K-1)(K-2)-1}$ & $K-\frac{1}{(K-2)^2}$\\
    \hline
  \end{tabular}
   \\[2ex]
  \caption{Infinite families of negative curves. }
  \label{tab-infinite}
\end{table}
\egroup

There are a few other negative curves that can be found using computer search, but we have not been able to fit them into infinite families. (See Table~\ref{tab-add} on page~\pageref{tab-add}
below for a list of these curves.) In general, for each vanishing order $m$ there is a finite number of isomorphism classes of negative curves. For $m=1,2$ there is a unique curve, for $m=3$ there are two curves, for $m=4,5,6$ there are $4$ curves. All these curves for $m\leq 6$ can be found on the map. Computer experiments suggest that as $m$ gets larger, it becomes more difficult to find a negative curve that is not in one of the infinite families. 
Among the negative curves that we have found which do not yet fit into an infinite family, the largest value of $m$ that occurs is $m=99$.

\subsection{Special negative curves.}

To find a negative curve $C$, we need an irreducible polynomial vanishing to order at least $m$ at $e$ and supported in a triangle $\Delta$ with area $\leq \frac{m^2}{2}$. The vanishing condition gives ${m+1\choose 2}$ linear equations in the coefficients of $\xi$. Thus, if $\Delta$ contains at least ${m+1\choose 2}+1$ lattice points, then a polynomial vanishing to order at least $m$ can always be found. This polynomial may be reducible. If it is irreducible we call the polynomial $\xi$ (or the negative curve $C$ it defines) {\em special} if the triangle $\Delta$ contains fewer than ${m+1\choose 2}+1$ lattice points. In that case we call the difference between ${m+1\choose 2}+1$ and the number of lattice points in $\Delta$ the {\em deficiency} of $\xi$. 

Two special negative curves were found by Kurano and Matsuoka \cite{KuranoMatsuoka}. These are the unique negative curves in the blowups of $\PP(8,15,43)$ and $\PP(5,33,49)$. They have vanishing orders $m=9$ and $m=18$, respectively, and deficiency $1$. These two weighted projective planes correspond to triangles with slopes $(s=1.6, t=3.75)$ and $(s=1.6\overline{6}, t=3.3)$ in our map.

We generalize the first example of Kurano and Matsuoka to an infinite family of special negative curves, one for each $K\geq 4$. The minimal triangles for these curves have slopes 
\[s=\frac{K(K-2)}{(K-1)(K-2)-1}, \quad t=K-\frac{1}{(K-2)^2},\]
vanish to order $m= (K-3)(K-1)^2$ and have deficiency $\frac{(K-2)(K-3)}{2}$.

We do not know if the $m=18$ example of Kurano and Matsuoka can be extended to an infinite family.

The specialty of a curve is related to an interpolation problem. A curve defined by $\xi$ is special iff there is a degree $\leq m-1$ bivariate polynomial that vanishes at all lattice points of the triangle $\Delta$. In general, the deficiency is the dimension of the space of polynomials of degree $\leq m-1$ that vanish at all lattice points of $\Delta$. In the infinite family of special curves mentioned above, the deficiency grows quadratically with $K$. Hence, the triangles of these curves have more and more relations among their lattice points.

Kurano \cite{Kurano21} studied the genus of negative curves, noting in particular that all known negative curves are rational. Similarly, all negative curves that we have listed are rational, except possibly the special curves. Consider a negative curve defined by $\xi$, and let $Y$ be the blowup of the toric variety defined by the Newton polygon of $\xi$. Here we assume $m>1$ to avoid the case where the Newton polygon is a line segment. Then, $\xi$ defines a negative curve $C$ that lies in the smooth locus of $Y$. One can compute that $K_Y \cdot C<0$ for all known negative curves except the special ones. This implies that such curves are rational and smooth in $Y$. For the special family we find that $K_Y \cdot C= (K-1)(K-4)\geq 0$, and the arithmetic genus of the curve in $Y$ is $\frac{(K-1)(K-4)}{2}$. We do not know the geometric genus of these curves when $K\geq 5$.

\subsection{No negative curves.}

It is possible that for some triangle $\Delta$ there is no negative curve at all. In that case the boundary of the Mori cone lies on a ray with self-intersection number zero. Kurano and Matsuoka \cite{KuranoMatsuoka} computed that the blowup of $\PP(9,10,13)$ does not have a negative curve for $m\leq 24$. One can also see, by computation or from the Pick's theorem, that as $m$ grows larger, a negative curve with vanishing order $m$ must have higher and higher deficiency. This strongly suggests that this blowup has no negative curve, but we do not have a proof of it. Another interesting fact is that if there is no negative curve in $X$, then the Mori cone of $X$ must have an edge with irrational slope.

We conjecture that most of the $st$-plane that is not already covered by diamonds contains no negative curve. Nevertheless, we do not have a proof of this for any triangle.

\subsection{Mapping $\PP(a,b,c)$.}

There is a large body of work in commutative algebra and algebraic geometry on the MDS property of blowups of $\PP(a,b,c)$. For example, Huneke \cite{Huneke}, Cutkosky \cite{Cutkosky} and Srinivasan \cite{Srinivasan} have shown that these blowups are always MDS when one of the weights equals $1,2,3,4$ or $6$. In particular, such blowups contain a negative curve. 

In a recent work, McKinnon, Razafy, Satriano and Sun \cite{MRSS} compute bounds for the nef cone of $\Bl_e\PP(a,b,c)$ for small values of $a$. 

It is therefore interesting to see where the weighted projective planes $\PP(a,b,c)$ lie on the map and if we can find negative curves in them.

We will show that if $a$ is fixed, then all $\PP(a,b,c)$ correspond to points on the $st$-plane with either $s$ or $t$ of the form $\frac{a}{d}$, where $1\leq d\leq a-1$ and $\gcd(a,d)=1$. There are further symmetries that can be used to reduce the number points on the $st$-plane that we need to consider. For example, the $a=4$ triangles lie on the horizontal line $t=4$ and on the vertical lines $s=\frac{4}{3},\frac{4}{1}$. The ones on the horizontal line $t=4$ contain a negative curve from one of the infinite families $IT_4(M_n,N_n)$, $RT_4(M_n, N_n)$. The ones lying on the vertical lines can only contain the negative curve with $m=1$ defined by a binomial. One can similarly find negative curves in all cases when $a<4$ or $a=6$, see Example~\ref{classical-results}. Moreover, once the negative curves have been identified, the MDS property for these spaces follows from Proposition~\ref{prop-CD}.

It is not known if all blowups of $\PP(a,b,c)$ with $a=5$ are MDS or even if they contain a negative curve. In our picture, all triangles corresponding to $a=5$ are equivalent to the ones on the horizontal line $t=5$ or on the vertical line $s=5/3$. The ones on the line $t=5$ contain a negative curve from the infinite families $IT_5(M_n,N_n)$ and $RT_5(M_n,N_n)$, and they are all MDS. The vertical line $s=5/3$ is the interesting one. 
See Figure~\ref{fig-m2} on page~\pageref{fig-m2} below for a closer look at this vertical line. The line contains the curve in $RT_4(3,2)$ and the special curve with $m=18$ of Kurano and Matsuoka. 
Along this line there is a sequence of diamonds with some gaps between them. We suspect that most triangles in these gaps do not contain negative curves.  Srinivasan \cite{Srinivasan} studied the case $a=5$ and gave bounds on $b/c$ for the blowup to be a MDS. These bounds can be related to the location of the triangle on the line $s=5/3$.

\subsection{The MDS property}

We fix one diamond corresponding to a negative curve $C^\circ \subset T$ and ask which triangles in it have the MDS property. One expects that the MDS region in each diamond is divided into pieces corresponding to different curves $D$.

It turns out that the MDS property in a diamond is often determined by neighbouring diamonds. Suppose a triangle $\Delta$ lies in the intersection of two diamonds. This $X$ contains two irreducible curves with zero self-intersection: the negatives curve of each diamond. Hence, $X$ is a MDS. The MDS property is sometimes preserved as we move from the intersection point into the interior of one of the diamonds. The reason is that the two curves that were disjoint in $X$ sometimes remain disjoint; one of them becomes the negative curve $C$, the other one the curve $D$. 

In our previous work \cite{GGK1, GGK2, GGK3} we have determined the MDS property in some cases for the negative curves in $IT_K(M,N)$ and $RT_K(M,N)$. It turns out that in all these cases the MDS property can be explained by the intersection of diamonds. For example, as we move along the horizontal line $t=K$, the diamonds for $IT_K(M,N)$ and $RT_K(M,N)$ are stringed together. As we move from one diamond to the next, the $C$ and $D$ curves switch their roles. For triangles in the diamond of $IT_K(M_n,N_n)$, $n>1$, we can determine completely which ones are MDS and which ones are not. For the remaining triangles in the two infinite families we can do this only for the upper half of the diamond.

The intersection of diamonds can be used to prove the MDS property in many other cases. This includes the minimal triangles supporting the special curves, and the new family of non-special curves for $K$ even. In addition to these examples, we will prove that all triangles on the horizontal line $t=K\in \ZZ$ have the MDS property. In the region where the negative curve is defined by $\xi=1-y$, it is interesting that points on the line $t=K$ with different $s$ value have a different $D$ curve (even when restricted to the torus). Indeed, the Newton polygons of the polynomials defining the curves $D$ have left edge with slope $s$. This shows somewhat unexpectedly that the interval is not divided into discrete subintervals corresponding to different $D$. Instead, every rational point corresponds to a different polynomial defining the curve $D$.  

We refer again to Figure~\ref{fig-m2} on page~\pageref{fig-m2} below for some of the intricacies involved in determining the MDS regions in each diamond.

\subsection{Positive characteristic.} We work over a field $k$ of characteristic $0$. However, one can draw a similar picture of diamonds when working over any field. In positive characteristic the existence of a negative curve implies that the blowup is a MDS \cite{Cutkosky}. Negative curves in characteristic $0$ can all be defined by integer coefficients. It follows from the proofs given below that the four infinite families of curves stay irreducible in any characteristic. Hence, these same diamonds appear when we work in positive characteristic. Some negative curves in characteristic $0$ are reducible in characteristic $p$. For example, the curve with $m=16$ in  Table~\ref{tab-add} on page~\pageref{tab-add} decomposes as a union of an irreducible negative curve with $m=15$ and a curve with $m=1$ in characteristic $2$. In positive characteristic there are in general more negative curves. For example, the blowup of $\PP(9,10,13)$, which is not known to contain a negative curve in characteristic $0$, does contain negative curves in characteristics $2$ and $3$.

\section{The Parameter space of triangles}

As explained in the introduction, we consider rational triangles $\Delta$ in $\RR^2$ with one edge horizontal and the other two edges with slopes $s$ and $t$. In this section we study the $st$-plane that parametrizes these triangles. We call two triangles isomorphic if they differ by a translation, scaling, and action of $\GL_2(\ZZ)$ on the plane $\RR^2$. Two triangles are isomorphic if and only if the corresponding toric varieties are isomorphic. 

For a more complete picture, one should consider the slopes of all three edges of $\Delta$. For each triangle we can choose coordinates so that one of its edges is horizontal. This implies that every triangle is isomorphic to one with slopes $\{0, s, t\}$, corresponding to the point $(s,t)$ or $(t,s)$ in our parameter space. However, we should warn that considering the $st$-plane only has several shortcomings. The most serious of them being that making an edge horizontal is not a continuous operation: two triangles may have slopes very close to each other, but the corresponding points on the $st$-plane can be very far apart. Nevertheless, we consider the $st$-plane as a good way to visualize the parameter space.

\subsection{The fundamental domain}

We consider triangles with one edge horizontal, and isomorphisms between these triangles that map the horizontal edge to another horizontal edge. Let us first find a fundamental domain on the $st$-plane containing one point for each isomorphism class.

We will make one more restriction by ignoring the triangles where $s$ and $t$ have different signs, or where one of them is infinite. Such triangles are very simple for the problem we consider. In these cases the variety $X=\Bl_e X_\Delta$ contains two non-intersecting curves, defined by the vanishing of $1-y$ and $1-x$. Hence, $X$ is always a MDS with the negative curve defined by either $1-y$ or $1-x$. 

\begin{lemma} A fundamental domain in the coordinates $u=1/s, v=1/t$ can be chosen as the triangle in the $uv$-plane with vertices
  \[(0,0), (1,0), (1/2,1/2).\]
\end{lemma}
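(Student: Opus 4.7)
My plan is to describe the equivalence group acting on the $(u,v)$-plane, reduce any point to $F$ by an explicit sequence of moves, and verify uniqueness using the weight classification of Picard-rank-one toric surfaces.

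First, I will identify the group. Isomorphisms preserving the horizontal edge correspond to the stabilizer of the slope $0$ in $\GL_2(\Z)$, namely matrices $\begin{pmatrix} \pm 1 & b \\ 0 & \pm 1 \end{pmatrix}$ with $b \in \Z$. Working these out on $(u,v) = (1/s, 1/t)$, I obtain the integer diagonal shift $(u, v) \mapsto (u + b, v + b)$ (from the $\det = +1$ case) and the flip $(u, v) \mapsto (n-v, n-u)$ for integers $n \geq \lceil u \rceil$ (from the $\det = -1$ matrix $\begin{pmatrix} -1 & n \\ 0 & 1 \end{pmatrix}$ after reordering to restore $0 < s < t$); both moves preserve the invariant $d := u - v$. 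Additionally, the $S_3$-action of choosing which of the three edges of $\Delta$ is horizontal is realized by further $\GL_2(\Z)$-Möbius transformations and changes $d$.

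Second, to show every orbit meets $F$: a direct calculation on a triangle with vertices $(0,0), (L, 0), (a, b)$ gives $d = L/b$, and translating into the raw toric weights $(W_1, W_2, W_3)$ of the three edges, $d = W_1/(W_2 W_3)$ when $W_1$ is horizontal. Choosing $W_1 = \min(W_1, W_2, W_3)$ yields $d \leq 1$, with equality only for $\PP^2$. Along the line $u - v = d \leq 1$, the combined shift/flip orbit contains the two residues $v = \{v_0\}$ and $v = 1 - \{v_0 + d\}$ in the strip $\{0 \leq v < 1\}$, summing to $1 - d$ in the no-carry case; the smaller representative therefore lies in $[0, (1-d)/2]$, placing the corresponding point in $F$. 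Degenerate classes such as $\PP^2$ and $\PP(1, 1, n)$ land on boundary points of $F$ (the vertex $(1,0)$ and the edge $v = 0$, respectively).

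For uniqueness, I will show the map from $\mathrm{int}(F)$ to ordered weight triples is injective: $W_1$ is determined by $d = W_1/(W_2 W_3)$, and $W_2, W_3$ are pinned down by the Bezout-type identity $q W_3 - w W_2 = W_1$ with $s = W_2/q, t = W_3/w$. Since the multiset $\{W_1, W_2, W_3\}$ is a complete invariant of $X_\Delta$, distinct interior points of $F$ yield non-isomorphic triangles. The main obstacle I foresee is the careful boundary analysis: verifying that the smaller of the two residues is strictly less than $(1-d)/2$ for non-degenerate classes, and properly accounting for the carry case in $\{v_0\} + d$ and for the case $\gcd(W_2, W_3) > 1$, where the formula $d = W_1/(W_2 W_3)$ requires adjusting by a $\gcd$ factor coming from the primitive normal vectors of the edges.
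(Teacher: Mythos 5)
Your identification of the stabilizer of the horizontal direction and of its action $(u,v)\mapsto(u+b,v+b)$, $(u,v)\mapsto(n-v,n-u)$ is correct and matches the paper's starting point. The gap is in which group you then quotient by. The lemma, as framed by the sentence immediately preceding it, concerns a fundamental domain only for isomorphisms that \emph{preserve the horizontal edge}; the paper deliberately does not quotient by the $S_3$ of re-choosing which edge is horizontal, and right after the lemma it warns that up to three points of this domain can correspond to the same triangle. By adjoining the $S_3$ action you are proving a different --- and in fact false --- statement. Concretely, your uniqueness claim that distinct interior points of $F$ yield non-isomorphic triangles fails: the points $(u,v)=(2/3,1/4)$ and $(2/5,1/4)$, i.e.\ $(s,t)=(1.5,4)$ and $(2.5,4)$, both lie in the interior of $F$, yet the corresponding triangles are isomorphic via the vertical shear that makes the slope-$4$ edge horizontal (the identification $(s,K)\sim(K-s,K)$ used throughout the paper). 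The logic is also a non sequitur: injectivity into \emph{ordered} weight triples does not prevent two points from sharing the same weight multiset, which is exactly what happens in this example.

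The intended proof is much shorter: the two generators preserve $u-v$ and act on $u+v$ as the infinite dihedral group generated by $x\mapsto x+2$ and $x\mapsto -x$, so the strip $0\le u+v\le 1$ is a fundamental domain for the stabilizer action; intersecting with the ordering $u\ge v$ and discarding the points with $u>0>v$ (the excluded triangles whose two slopes have different signs) leaves exactly the triangle with vertices $(0,0),(1,0),(1/2,1/2)$. Your existence argument via minimal weights and residues is essentially rederiving the condition $u+v\le 1$ (note that $v\le(1-d)/2$ with $d=u-v$ is literally $u+v\le 1$), but it is routed through a choice of horizontal edge that the stabilizer action does not permit; because of that routing it only shows that every triangle has \emph{some} representative in $F$, not that every stabilizer orbit of an admissible pair meets $F$ exactly once, which is what the lemma asserts.
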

\begin{proof}
  Since we consider $u$ and $v$ as an unordered pair, we may order them so that $u>v$. The operations of translation and scaling do not change the slopes. The subgroup of $GL_2(\ZZ)$ that preserves the zero slope is generated by the shear transformation
  \[ (u,v) \mapsto (u+1,v+1),\]
  and multiplication with $-1$, which after reordering so that $u>v$ can be written as
  \[ (u,v)\mapsto (-v,-u).\]
  This last operation is the reflection of the plane across the line $u=-v$. 

  A fundamental domain for this action is the strip 
  \[ 0 \leq u+v \leq 1. \]
  Restricting to the half-plane $u > v$ and removing points of the form $u> 0, v < 0$ gives the triangle as in the statement of the lemma.
\end{proof}

Figure~\ref{fig-plot-inv} shows the fundamental domain in the coordinates $u,v$. 
In the coordinates $s,t$ the fundamental domain is defined by 
\[ t \geq s \geq 1, \quad t\geq 1+\frac{1}{s-1}.\]
For a point in this domain, we may assume that the base of the triangle is horizontal, $s$ is the slope of the left edge and $t$ the slope of the right edge. 

\begin{figure}
  \centering
  \includegraphics[width=\textwidth]{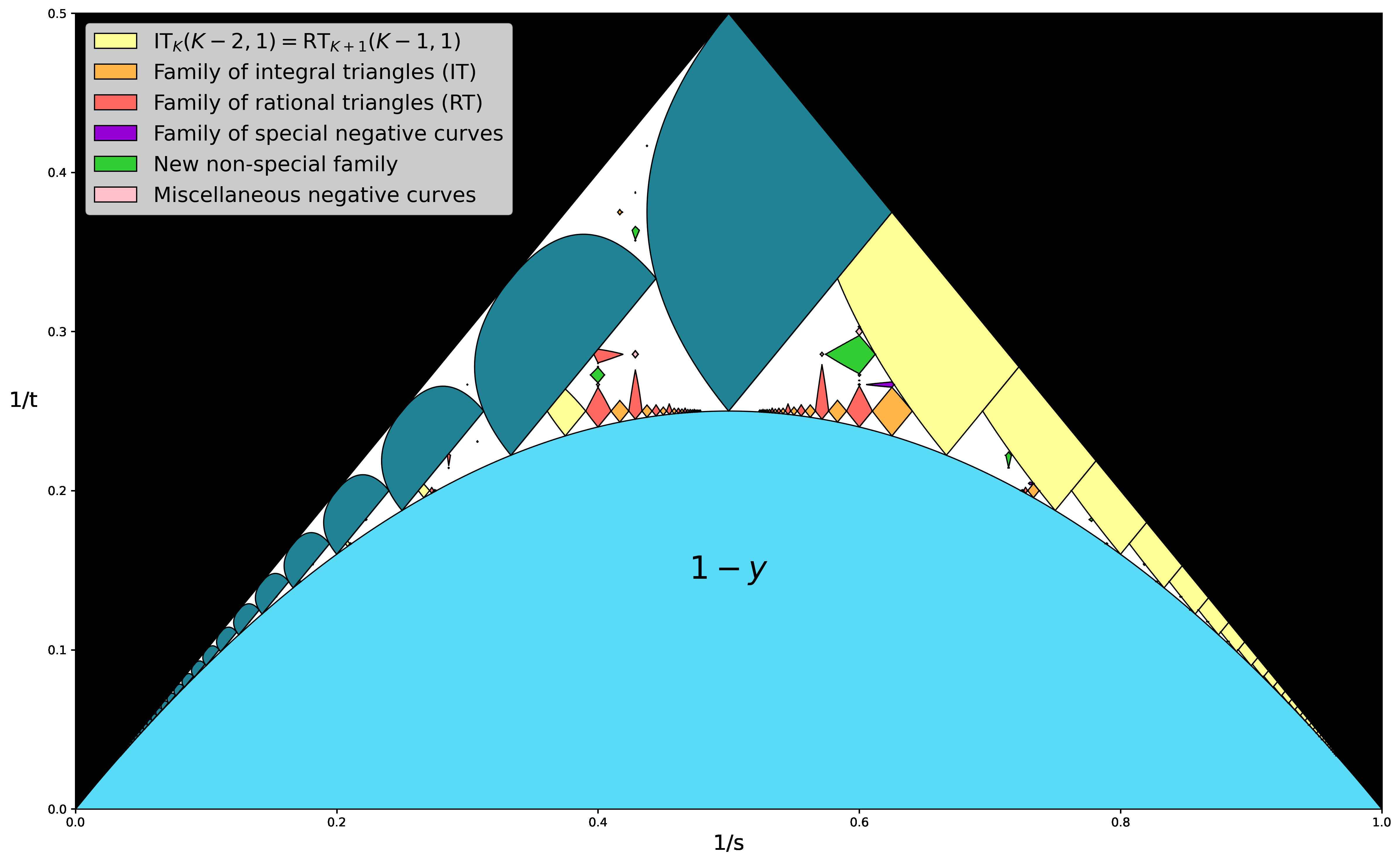}
  \caption{Negative curves in the parameter space of triangles.}
  \label{fig-plot-inv}
\end{figure}

Any triangle corresponds to a unique point $(s,t)$ in the fundamental domain above once we choose which edge we make horizontal. Since there are three choices for the horizontal edge, there are up to three points that correspond to the same triangle. We will consider some examples of this.

\begin{example}
  Consider the set of points $(s,t)$ where $t=K$ is an integer. We can make the edge with slope $t$ horizontal by applying a shear transformation in the vertical direction. The resulting triangle has coordinates $(t-s, t)$. Thus, the triangles with $K/2\leq s<K$ are isomorphic to triangles with $0<s\leq K/2$. Moreover, if $s \leq 1$ then we can transform the triangle to one that has $s$ and $t$ with different signs or one of them infinite. Hence, when $t=K$ it is enough to consider points with $1<s<K/2$. This isomorphism explains the symmetry in Figure~\ref{fig-plot} on the horizontal line $t=K$.

If $s=L$ is an integer, we can similarly make the edge with slope $s$ horizontal. The resulting triangle has slopes $\{s-t, s\}$, where $s>0$ and $s-t<0$, so we may ignore these triangles.

The triangle with slopes $s=2.25, t=3.5$ has the property that no matter which edge we make horizontal, we get the same point $(s,t)$. This triangle appears in the first entry of Table~\ref{tab-add} below as the minimal triangle supporting a negative curve.  
\end{example}

\section{Weighted projective planes $\PP(a,b,c)$.}

Weighted projective planes $\PP(a,b,c)$ are special cases of the toric varieties $X_\Delta$. We consider here which points $(s,t)$ correspond to weighted projective planes and how to determine $a,b,c$ from $s,t$. 

For a rational triangle $\Delta$, let $\vec{n}_i$, $i=1,2,3,$ be its outer normal vectors, which we choose to be integral and primitive. Recall that $X_\Delta$ is isomorphic to $\PP(a,b,c)$ if the normal vectors generate the lattice $\ZZ^2$, in which case $a,b,c$ are given by a relation
\[ a \vec{n}_1 + b \vec{n}_2 + c \vec{n}_3=0.\]
The positive integral vector $(a,b,c)$ is usually also chosen primitive.

\begin{lemma} 
  Let the triangle $\Delta$ have rational slopes $0 < s <t$. Write $s = a/p$, $t=b/q$ in lowest terms. Then $X_\Delta$ is isomorphic to a weighted projective plane if and only if $a$ and $b$ are relatively prime. In this case $X_\Delta \isom \PP(a,b,c)$, where $c=bp-aq$. 
\end{lemma}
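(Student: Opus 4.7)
The plan is to set up coordinates explicitly, write down the three outer normal vectors in terms of the data $(a,p,b,q)$, and then apply the standard toric criterion for $X_\Delta$ to be a weighted projective plane.

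First I would choose coordinates placing $\Delta$ with vertices at $v_1=(0,0)$, $v_2=(B,0)$ and $v_3=(x_a,y_a)$, where the base $\overline{v_1 v_2}$ is the horizontal edge. Because $0<s<t$, the apex $v_3$ must lie to the right of $v_2$, i.e.\ $x_a>B>0$ and $y_a>0$. The left edge $\overline{v_1v_3}$ has direction vector proportional to $(p,a)$ (since $s=a/p$), and the right edge $\overline{v_2v_3}$ has direction vector proportional to $(q,b)$ (since $t=b/q$). Rotating these by $\pm 90^\circ$ so that the resulting vectors point \emph{away} from the opposite vertex, I get
\[
\vec n_1=(0,-1),\qquad \vec n_2=(-a,p),\qquad \vec n_3=(b,-q).
\]
These are primitive precisely because $\gcd(a,p)=\gcd(b,q)=1$, which is our lowest-terms hypothesis.

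Next I would recall that $X_\Delta$ is a weighted projective plane if and only if the three primitive outer normals span the lattice $\ZZ^2$, and that the index of the sublattice they generate is the gcd of the $2\times 2$ minors of the matrix $[\vec n_1\mid \vec n_2\mid \vec n_3]$. A direct computation of those minors gives $\pm a$, $\pm b$ and $\pm(bp-aq)$. Since $\gcd(a,b)$ automatically divides $bp-aq$, the gcd of all three minors is just $\gcd(a,b)$. Hence $\vec n_1,\vec n_2,\vec n_3$ span $\ZZ^2$ if and only if $\gcd(a,b)=1$, which proves the first half of the lemma.

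For the second half, assuming $\gcd(a,b)=1$, I would solve $w_1\vec n_1+w_2\vec n_2+w_3\vec n_3=\vec 0$ with $w_i$ a positive primitive integer triple. The first coordinate gives $-aw_2+bw_3=0$, and since $\gcd(a,b)=1$ this forces $(w_2,w_3)=\lambda(b,a)$; the second coordinate then yields $w_1=\lambda(bp-aq)$, which is positive because $s<t$. Primitivity of $(w_1,w_2,w_3)$ fixes $\lambda=1$, so $X_\Delta\cong\PP(bp-aq,\,b,\,a)$. Matching this against the ordering in the statement gives $c=bp-aq$, as claimed.

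The proof is really just linear algebra once the normals are in hand, so no step is genuinely hard. The only point requiring care is the orientation convention: one must check that, with $0<s<t$ and the apex to the right of the base, the rotations of the edge directions yield the outer normals with the signs written above. Getting any sign wrong would permute the roles of $a,b,c$ but not affect the gcd statement; to land on the precise formula $c=bp-aq$ the orientation must be nailed down exactly as described.
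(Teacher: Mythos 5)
Your proof is correct and follows essentially the same route as the paper: both write down the outer normals $(0,-1)$, $(-a,p)$, $(b,-q)$, reduce the lattice-generation condition to $\gcd(a,b)=1$ (the paper does this by noting that $(0,-1)$ is among the normals so only the first coordinates $-a$, $b$ matter, which is equivalent to your minor computation), and read off $c=bp-aq$ from the linear relation. Your version merely spells out the sign/orientation checks and the primitivity of the weight vector in more detail.
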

\begin{proof}
  The normal vectors are $(b,-q)$, $(-a,p)$, $(0,-1)$. These generate the lattice $\ZZ^2$ if and only if $a$ and $b$ generate the lattice $\ZZ$. The three vectors satisfy the relation with coefficients $a,b,c=bp-aq $.
\end{proof}

The previous lemma tells us exactly which points $s,t$ correspond to weighted projective planes. Conversely, given a weighted projective plane $\PP(a,b,c)$, it corresponds to a triangle $\Delta$. We can make the side corresponding to $c$ horizontal. Then the other sides must have slopes $a/p$ and $b/q$ for some $p$ and $q$. Consider now a fixed $a$ and varying $b$ and $c$. All such $\PP(a,b,c)$ must have either $s$ or $t$ of the form $a/q$ for some $q$ that is relatively prime to $a$. If the point $(s,t)$ lies in the fundamental domain, then $1\leq q <a$. 

It may happen that the point $(s,t)$ corresponding to some $\PP(a,b,c)$ has $s$ and $t$ with different signs and hence cannot be moved to the fundamental domain. This case is well-known and often discarded in the commutative algebra literature. It means that the point $e=(1,1)$ that is blown up is a complete intersection in $\PP(a,b,c)$, with homogeneous ideal generated by two elements, $1-x$ and $1-y$ in our notation.

\begin{example}\label{classical-results}
  Consider the weighted projective planes $\PP(a,b,c)$ where $a=2,3,4,6$. These correspond to points where either $s$ or $t$ is equal to $a$ or $a/(a-1)$. The cases where $s$ is an integer are isomorphic to triangles where $s$ and $t$ have different signs and are hence not interesting. When $t$ is an integer, we may further restrict $s\leq t/2$. Thus, all interesting examples of such $\PP(a,b,c)$ have coordinates $t=a, 1<s\leq t/2$ or $s=a/(a-1), t>a$. In Figure~\ref{fig-plot} these points lie on the horizontal line $t=a$ covered by the diamonds of $IT_a(M,N), RT_a(M,N)$ and on the vertical line $s=a/(a-1)$ covered by the diamond of $IT_a(a-2,1)$ and the diamond of $\xi=1-y$.

\end{example}

\begin{example}
  Consider the weighted projective planes $\P(a,b,c)$ with $a=5$. These correspond to points $(s,t)$ where either $s$ or $t$ has the form $5/q$, where $q=1,2,3,4$. We proceed to explain how these cases are isomorphic to ones with $s=5/3, t>2$ and $t=5, 1<s<5/2$.
  
%
  First consider points $(s,t)$ where $s=5/2$. Choosing coordinates appropriately, we can make the edge with slope $s$ horizontal. The other two edges will then have slopes $5/3$ and $2+1/(t-3)$. This shows that triangles on the vertical line $s=5/2$ are isomorphic to triangles on the vertical line $s=5/3$. The interesting line segments here are $s=5/2, t> 3$ and $s=5/3, t>2$. These are mapped to each other bijectively. The line segments $s=5/2, 5/2<t<3$ and $s=5/3, 5/3<t<2$ are mapped to points where $s$ and $t$ have different signs and hence can be ignored.
  
Now consider the case $t=5/2$. This lies in the fundamental domain if $5/3<s<5/2$. By the group action these are isomorphic to $s=5/3$, $5/3<t<5/2$, which we considered before.

When $s=5$, we can make this side horizontal and get the two slopes with different signs. These can be ignored.  

When $s=5/4$ and we make this edge horizontal, the remaining two slopes are $t=5, s=4-1/(t-4)$. 

  In summary, all interesting cases of $(s,t)$ in the fundamental domain where either $s$ or $t$ is equal to $5/q$ for some $q=1,2,3,4$ are isomorphic to triangles with coordinates $s=5/3, t>2$ and $t=5, 1<s<5/2$. In Figure~\ref{fig-plot} the horizontal line $t=5$ is covered by diamonds of $IT_5(M,N), RT_5(M,N)$. The vertical line $s=5/3$ is magnified in Figure~\ref{fig-m2} below. The vertical line contains the vertical axis of $RT_4(3,2)$ and several other diamonds, with gaps between them.

  It is interesting that the values of $b/c$ not covered by Srinivasan's bounds in \cite{Srinivasan} exactly lie on the vertical line $s=5/3$. Moreover, the slope $t$ of these triangles is a function of $b/c$ and the bounds given by her precisely correspond to the bottom vertex of the diamond of $RT_{4}(3,2)$ and the top boundary point of the diamond of $IT_{3}(1,1)$.

\end{example}

\section{Infinite families of negative curves}

In \cite{GGK3} we described two infinite families of negative curves, each family depending on two parameters. We start by recalling these families. We then construct a new infinite family of non-special negative curves. 

\subsection{The families $IT_K(M,N)$ and $RT_K(M,N)$.}\label{subsection-IT-RT}

Consider positive integers $K$, $M$ and $N$ satisfying the equation 
\begin{equation} \label{eqn-MN}
  (M+N)^2 = KMN+1.
\end{equation}
For $K=3$ there is a unique pair $(M,N)=(1,1)$, and for $K\geq 4$ there is an infinite number of pairs $(M,N)$ satisfying the equation. We describe these pairs below. Since $K$ is determined by $M$ and $N$ if $M,N>0$, we will often omit it from the notation.

For each $K, M, N$ satisfying the Equation~\ref{eqn-MN} there are two negative curves. The first one is supported in the integral triangle $IT_K(M,N)$ with vertices
\[ (0,0), (M,0), (M+N, KN).\]
This triangle has coordinates $s=KN/(M+N), t=K$, and the negative curve has vanishing order $m=M+N$. The second curve is supported in the rational triangle $RT_K(M,N)$ with vertices
\[ (0,0), \left(M-\frac{M+N}{K},0\right), (M,M+N).\]
The triangle has coordinates $s=(M+N)/M, t=K$, and the negative curve has vanishing order $m=M$. 

On the $st$-plane the triangles $IT_K(M,N)$ and $RT_K(M,N)$ lie on the horizontal line with $t=K$. The curves with $M>N$ are isomorphic to those with $M<N$. Nevertheless, both diamonds appear on the $st$-plane because points in one diamond may not be isomorphic to points in the other. 

For small values of $M$ and $N$ there are isomorphisms between pairs of these curves. For example, the curve in $IT_{K-1}(K-3,1)$ is isomorphic to the curve in $RT_{K}(K-2,1)$. Similarly, all curves with $m=1$ are isomorphic. This includes $IT_K(0,1)$ and $RT_K(1,K)$ for all $K$. Other than these coincidences, the negative curves are pairwise non-isomorphic. 

We will recall how to find all solutions $K,M,N$ to Equation~\ref{eqn-MN}, and how to construct the equations of the negative curves recursively. These equations will be used below.

Fix $K\geq 3$ and consider the sequence of integers $F_0=0, F_1=1, F_2=K-2, \ldots$ defined by the recurrence
\[ F_{n+2} = (K-2) F_{n+1} - F_n.\]
Then the non-negative integral solutions $M>N$ of the equation above are
\[ (M_n, N_n) = (F_{n+1}, F_n), \quad n=0,1,\ldots.\]
Since the equation is symmetric, $(M=N_n, N=M_n)$ is also a solution. However, we have isomorphisms of triangles and hence isomorphisms of negative curves
\[ IT(N_n,M_n) \isom IT(M_n, N_n), \quad RT(N_n, M_n) \isom RT(M_{n-1}, N_{n-1}).\]

We let $\xi_n^{int}$ and $\xi_n^{rat}$ be the polynomials defining the negative curves for $IT(M_n,N_n)$ and $RT(M_n,N_n)$, respectively, normalized so that their constant terms are $1$. We will sometimes use the notation $\xi_{n,K}^{int}$ or $\xi_{n,K}^{rat}$ to make the value of $K$ explicit.

For every $n>0$ these polynomials satisfy the recurrence relations
\begin{align}        \label{eqn.recurrence.relations} 
\begin{split}
  \xi^{int}_{n} &= \xi^{rat}_{n} \xi^{rat}_{n-1} -\varepsilon^{rat}_{n} x^{M_n}(y-1)^{M_n+N_n},\\
  \big(\xi^{rat}_{n-1}\big)^K  &=  \xi^{int}_{n} \xi^{int}_{n-1} -\varepsilon^{int}_{n} x^{M_n+N_n}(y-1)^{KN_n}.
\end{split}
\end{align}
Here $\varepsilon^{rat}_{n}$ and $\varepsilon^{int}_{n}$ are the coefficients of the highest degree terms of the polynomials. For $K$ even they are
\[ \varepsilon^{int}_{n} = -1, \quad \varepsilon^{rat}_{n} = (-1)^{n+1}.\]
For $K$ odd,
\[ \varepsilon^{int}_{n} =  \begin{cases}\hphantom{-}1 & \text{if $n\equiv 1 \mod 3$,}\\ -1 & \text{otherwise,} \end{cases} \qquad
  \varepsilon^{rat}_{n} =  \begin{cases}\hphantom{-}1 & \text{if $n\equiv 2 \mod 3$,}\\ -1 & \text{otherwise.} \end{cases} \]
From these equations and the initial conditions $\xi^{int}_{0} = 1-x$ and $\xi^{rat}_{0} = 1-xy$ one can compute all these polynomials recursively.

\subsection{A new family of negative curves.} \label{sec-new-nonspecial}

We describe here a new infinite family of negative curves, complementing the two-parameter families described above. 

Let us denote the parameters
\[ a=2K-3,\quad b=2K-1, \quad c=4K^2-16K+11, \quad d= 2K-5, \quad m=\left\lceil K^2-\frac{7}{2}K+2\right\rceil.\]

\begin{theorem} 
  For each $K \geq 4$ there is a negative curve in the blowup of $\PP(a,b,c)$, with $m$ the order of vanishing at $e$. The triangle $\Delta_K$ corresponding to the weighted projective plane has horizontal base, and left and right slopes
  \[ s= \frac{a}{d} = 1+ \frac{2}{2K-5},\quad t=\frac{b}{2} = K - \frac{1}{2}.\]

  For even $K$, the triangle has vertices 
  \[ \left(-\frac{1}{a},0\right), \quad (m-K+2,0),\quad \left(m, \frac{(K-2) b}{2}\right).\]
  The triangle has two integral vertices and the lower left vertex rational.

  For odd $K$, the triangle has vertices 
  \[ (0,0),\quad (m-K+2,0),\quad \left(\frac{K-1}{2}+\frac{1}{c}\right) (d,a).\]
  This triangle has integral base and rational top vertex.
\end{theorem}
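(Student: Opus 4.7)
The plan is to verify the theorem by establishing three things: (i) the triangle $\Delta_K$ with the listed vertices has the claimed slopes and satisfies $X_{\Delta_K} \cong \PP(a,b,c)$; (ii) $\mathrm{Area}(\Delta_K) \leq m^2/2$; and (iii) there is an irreducible Laurent polynomial $\xi$ supported on $\Delta_K$ vanishing to order $m$ at $e=(1,1)$. Together with the setup from the introduction, these imply that the closure of $\{\xi = 0\}$ gives a negative curve in $\Bl_e \PP(a,b,c)$.

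For (i), I would compute the three edge slopes from the listed vertices in each parity case. The base is horizontal by inspection, and the other two slopes come out to $a/d = (2K-3)/(2K-5)$ and $b/2 = (2K-1)/2$. Writing $s = a/p$, $t = b/q$ in lowest terms gives $p = 2K-5$, $q = 2$, and since $\gcd(a,b) = \gcd(2K-3, 2K-1) = 1$, the lemma of Section 3 applies and yields $X_{\Delta_K} \cong \PP(a,b,bp-aq)$. A direct calculation gives $bp - aq = (2K-1)(2K-5) - 2(2K-3) = 4K^2 - 16K + 11 = c$. Step (ii) is a short computation: expanding $\mathrm{Area}(\Delta_K)$ via the shoelace formula from the vertices and substituting the formula for $m$ reduces the comparison with $m^2/2$ to a polynomial inequality in $K$ that one verifies directly. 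The split between the two parity cases in the statement is exactly what makes the bound work in each case: the ceiling in $m$ and the position of the non-integer vertex (lower-left when $K$ is even, top when $K$ is odd) compensate for one another.

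The heart of the proof is step (iii). The plan is first to produce a non-zero candidate $\xi$ by a dimension count: the vanishing-to-order-$m$ condition imposes $\binom{m+1}{2}$ linear equations on the coefficients of a polynomial supported on the lattice points of $\Delta_K$, and a direct lattice-point enumeration should show that $\Delta_K$ contains exactly $\binom{m+1}{2}+1$ such points. Hence the space of candidates is one-dimensional, $\xi$ exists and is non-special, and is unique up to scaling. A more explicit recursive construction, in the spirit of the relations \eqref{eqn.recurrence.relations}, should also be available by combining polynomials from the neighbouring $IT_{K-1}(M,N)$ and $RT_K(M,N)$ families with a monomial correction term chosen so that the support lies in $\Delta_K$ and the vanishing at $e$ reaches order $m$; this is the formula I would use to verify irreducibility.

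The main obstacle will be proving irreducibility of $\xi$. Any proper factorization $\xi = g \cdot h$ would decompose the Newton polygon of $\xi$ as a Minkowski sum of lattice polygons, while the vanishing orders of $g$ and $h$ at $e$ would sum to $m$. Combining the possible Minkowski decompositions of the integer hull of $\Delta_K$ with these vanishing constraints should rule out all non-trivial factorizations, following the pattern of the irreducibility arguments for $IT_K(M,N)$ and $RT_K(M,N)$ in \cite{GGK3}. Once $\xi$ is irreducible, the area bound from (ii) gives $C \cdot C \leq 0$ in $\Bl_e X_{\Delta_K}$, so that the strict transform of $\{\xi = 0\}$ is the required negative curve.
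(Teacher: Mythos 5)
Your steps (i) and (ii), and the existence half of (iii), coincide with the paper's argument: the paper also counts exactly $\binom{m+1}{2}+1$ lattice points in $\Delta_K$ (via Pick's theorem applied to the convex hull of the lattice points, which has area $(m^2-1)/2$ and lattice perimeter $m+1$) and concludes that a nonzero polynomial $f$ vanishing to order $m$ exists. The gap is in the irreducibility step, which you correctly identify as the heart of the matter but for which your plan is not a proof. First, a Minkowski-sum analysis requires knowing the Newton polygon of $f$, and a priori you only know $f$ is supported in $\Delta_K$; pinning down which vertices actually lie in the support is itself a substantial part of the paper's work. Second, and more fundamentally, the combinatorial data (Minkowski decompositions plus additivity of the multiplicity at $e$) does not exclude the dangerous factorizations: the candidate sub-polygons for a factor $C_1$ turn out to have $C_1\cdot C_1<0$, so to rule them out combinatorially you would have to show that no polynomial with that support vanishes to the required order --- a problem of the same difficulty as the one you started with. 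The case of a factor $(1-y)^j$ is a concrete instance: for odd $K$ the paper needs a dedicated lemma showing $1-y\nmid f$, proved by writing $C=(2K-5)C_1+C_2$ and checking that the nef inequality $C_1\cdot C_2\geq 0$ reduces to $-22K+25\geq 0$, which fails. Also, the explicit recursive formula you hope to lean on does not exist for this family (unlike the special family $\Upsilon_K$); existence here is purely by dimension count.

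The paper's actual mechanism is intersection-theoretic and uses a neighbouring diamond, exactly in the spirit of Proposition~\ref{prop-CD}: the irreducible curve $D$ defined by $\xi_1^{int}$, supported in $IT_{K-1}(K-3,1)$, whose diamond meets that of $\Delta_K$. For even $K$ one computes $C\cdot D=0$, which forces the (possibly reducible) $C$ onto the boundary of the Mori cone, hence $C$ is an integer multiple $g^p$ of an irreducible curve; disjointness of $C$ and $D$ then forces the right and top vertices of $\Delta_K$ into the support of $f$, and $p$ must divide the base length $m-K+2$ and the right-edge lattice length $(K-2)/2$, which are coprime. For odd $K$, where $C\cdot D\neq 0$, one instead shows that any factorization $f=f_1f_2$ produces a component $C_1$ with $D\cdot C_1<0$, contradicting the fact that $D$ is nef. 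If you want to salvage your outline, this auxiliary curve $D$ is the missing ingredient; without it the Minkowski/multiplicity bookkeeping does not close.
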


The rest of this subsection consists of the proof of the theorem.

In both even and odd cases it is easy to check that the triangle contains exactly ${{m+1}\choose{2}} +1$ lattice points. To see this, one considers the convex hull of lattice points in the triangle and computes its area $(m^2-1)/2$ and lattice perimeter $m+1$. Then Pick's theorem gives the number of lattice points. From the number of lattice points and the area of the triangle being $<m^2/2$, we know that the triangle supports a negative curve. The only problem is to show that this curve is irreducible. We will do this below for even and odd cases separately. Note also that these negative curves are not isomorphic to the negative curves in the previous infinite families. The triangles $RT(M,N)$ have width exactly $m$. For these triangles it is not possible to choose coordinates in which their width is $m$.

A special feature of this family of triangles, as well as the triangles $IT(M,N)$ and $RT(M,N)$ above, is that the convex hull of the lattice points in the triangle has width exactly $m$. In fact, to get a nontrivial negative curve, the width has to be at least $m$.

\begin{lemma}\label{lemma-width}
  Let $f(x,y)\in k[x,y]$ be a polynomial that vanishes to order $m$ at $e=(1,1)$. Assume that the polynomial $f$ has degree less than $m$ in $x$. Then $1-y$ divides $f(x,y)$.
\end{lemma}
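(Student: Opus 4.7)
The plan is to exploit the hypothesis $\deg_x f < m$ by restricting $f$ to the line $y = 1$. Vanishing to order $m$ at $e = (1,1)$ means $f$ lies in the $m$-th power of the maximal ideal $\mathfrak{m}_e \subset k[x,y]$, so I would expand $f$ in the local coordinates $(x-1), (y-1)$ at $e$ and write
\[ f(x,y) = \sum_{i+j \geq m} c_{ij}\,(x-1)^i (y-1)^j. \]
Setting $y = 1$ kills every term with $j > 0$, leaving
\[ f(x,1) = \sum_{i \geq m} c_{i,0}\,(x-1)^i, \]
which is divisible by $(x-1)^m$.

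Now the degree hypothesis enters: since $\deg_x f < m$, also $\deg_x f(x,1) < m$, and a polynomial in one variable of degree $< m$ that is divisible by $(x-1)^m$ must be identically zero. So $f(x,1) = 0$. Viewing $f$ as an element of $k[x][y]$ and performing polynomial division by $y-1$, I would write $f(x,y) = (y-1)\,g(x,y) + r(x)$ with $r(x) = f(x,1) = 0$, so $(y-1)$, and equivalently $1-y$, divides $f$. There is no real obstacle here; the entire argument reduces to the Taylor expansion at $e$ followed by a degree count, and the only subtlety is to be clear that "vanishes to order $m$" means $f \in \mathfrak{m}_e^m$, i.e., all Taylor coefficients of total degree less than $m$ at $e$ vanish.
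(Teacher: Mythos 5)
Your proof is correct and is essentially the paper's own argument: restrict to $y=1$, observe that $f(x,1)$ has multiplicity at least $m$ at $x=1$ but degree less than $m$, hence vanishes identically, so $1-y$ divides $f$. You have merely written out the Taylor-expansion and polynomial-division steps that the paper leaves implicit.
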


\begin{proof}
  The polynomial $f(x,1)$ must have multiplicity at least $m$ at $x=1$. Since the degree of this polynomial is less than $m$, it must vanish identically.
\end{proof}

Let us denote by $f(x,y)$ a nonzero polynomial supported in the triangle $\Delta= \Delta_K$ that vanishes to order at least $m$ at $e$. We will prove that this $f$ is irreducible. Let us start with the case of even $K$. 

\begin{lemma} 
  Assume that $K$ is even. Then $f(x,y)$ is irreducible.
\end{lemma}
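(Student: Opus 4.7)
Suppose for contradiction that $f=gh$ with $g,h\in k[x,y]$ nonconstant, and set $\alpha=\operatorname{ord}_e g$, $\beta=\operatorname{ord}_e h$, so that $\alpha+\beta\geq m$. The plan is to constrain the Newton polygons of $g$ and $h$ using Lemma~\ref{lemma-width} applied in two different lattice directions, and then rule out every resulting configuration.

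The first direction is the $x$-direction. Since the lattice hull of $\Delta_K$ has $x$-extent $[0,m]$, we get $\deg_x f\leq m$ and hence $\deg_x g+\deg_x h\leq m$. Lemma~\ref{lemma-width} gives the dichotomy: either $\deg_x g\geq\alpha$ or $(1-y)\mid g$, with the analogous statement for $h$. Assuming for the moment that neither factor is divisible by $1-y$, the inequalities collapse to $\deg_x g=\alpha$, $\deg_x h=\beta$, and $\alpha+\beta=m$. Because $K$ is even, the top vertex $(m,(K-2)b/2)$ is a lattice point of $\Delta_K$, and in fact the unique lattice point with $x$-coordinate $m$; it must therefore appear with nonzero coefficient in $f$, and decomposes multiplicatively as the product of the highest-$x$ monomials of $g$ and $h$.

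The second direction comes from a $\GL_2(\ZZ)$-change of variables that sends the primitive lattice direction $(d,a)=(2K-5,2K-3)$ of the left edge of $\Delta_K$ to $(1,0)$. This is a torus automorphism fixing $e=(1,1)$, so vanishing orders are preserved, and a direct check shows the transformed lattice hull has width exactly $m$ in the new $x$-direction (with the image of $(m,(K-2)b/2)$ remaining the unique lattice point of maximal new $x$-coordinate). Applying Lemma~\ref{lemma-width} in these coordinates yields a second dichotomy: either the corresponding ``tilted'' width of $g$ is at least $\alpha$, or a binomial of the form $1-x^p y^q$ (with $(p,q)$ read off from the inverse change of coordinates) divides $g$. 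Absent such binomial factors, we obtain a second width equality $\alpha+\beta=m$ in this tilted direction.

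Combining the two width equalities with the Minkowski-sum identity $\operatorname{Newt}(g)+\operatorname{Newt}(h)\subseteq\Delta_K$ forces $\operatorname{Newt}(g)$ and $\operatorname{Newt}(h)$ to be translated scaled copies of a common polygon whose Minkowski sum is the lattice hull of $\Delta_K$. To close the argument I would use lattice-point counts: each such scaled Newton polygon contains at most $\binom{\alpha+1}{2}+1$ (resp.\ $\binom{\beta+1}{2}+1$) lattice points, so the vanishing conditions uniquely determine $g$ and $h$ up to scalar, and the pinned shape makes them candidate negative curves in smaller triangles of the same family, contradicting the sharpness of the parameter relation $s=(2K-3)/(2K-5)$ at the smaller parameters. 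The remaining cases, where $1-y$ or $1-x^p y^q$ actually divides $f$, are excluded by specializing $f$ to the corresponding subtorus: for instance $f(x,1)$ is a polynomial of degree at most $m$ vanishing to order $m$ at $x=1$ with nonzero leading coefficient coming from the top vertex, hence equal to $c(x-1)^m$ with $c\neq 0$. The main obstacle will be the Minkowski pinning: the lattice hull of $\Delta_K$ is not itself a triangle -- for $K=4$ it is a quadrilateral with extra vertex $(1,2)$, and for larger $K$ it gains further vertices along the left edge -- so Minkowski-indecomposability is not automatic, and the two width equalities must be combined with care to rule out all nontrivial decompositions uniformly in $K$.
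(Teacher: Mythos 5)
Your strategy is genuinely different from the paper's, but it does not close. The decisive step --- ruling out nontrivial factorizations once the two width equalities are in hand --- is exactly the part you leave open, and the intermediate claims you lean on there are not correct as stated. Two width equalities (one per lattice direction) do not force $\operatorname{Newt}(g)$ and $\operatorname{Newt}(h)$ to be ``translated scaled copies of a common polygon'': Minkowski summands of a lattice polygon are in general not homothetic to it (a hexagon splits as a sum of two oppositely oriented triangles), and as you yourself observe the lattice hull of $\Delta_K$ has extra vertices along the left edge, so indecomposability is genuinely in question. Consequently the bound ``each summand contains at most $\binom{\alpha+1}{2}+1$ lattice points'' has no justification, and the final ``contradicting the sharpness of the parameter relation'' is not a mathematical argument. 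There is also a circularity in your treatment of the degenerate cases: the nonvanishing of the coefficient at the top vertex $(m,(K-2)b/2)$ was derived under the assumption $1-y\nmid f$, so it cannot then be used to exclude $1-y\mid f$; excluding that divisibility is nontrivial (the paper devotes a separate lemma to it in the odd case, via an intersection computation showing $C_1\cdot C_2<0$).

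The paper avoids all of this by working with intersection theory on $X$ rather than with Newton polygons. It produces an irreducible curve $D$, defined by $\xi_1^{int}$ supported in $IT_{K-1}(K-3,1)$ (a neighbouring diamond on the line $t=K-\frac12$ meets the diamond of $f$ there), and computes $C\cdot D=0$ with $C=\{f=0\}$ of negative self-intersection. This forces $[C]$ onto the boundary of the Mori cone, so $C$ is an integer multiple $pC'$ of an irreducible negative curve --- i.e.\ essentially only the case $f=g^p$ survives, not arbitrary $f=gh$. That last case is then killed combinatorially: disjointness of $C$ and $D$ forces the right and top vertices of $\Delta$ into the support of $f$, so the Newton polygon of $f$ has horizontal base of lattice length $m-K+2$ and right edge of lattice length $\frac{K-2}{2}$, which are coprime, whereas $p$ would have to divide both. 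If you want to salvage a purely combinatorial proof along your lines, you would need to actually classify the Minkowski decompositions of the lattice hull compatible with your two width constraints; the paper's route shows that a single well-chosen orthogonal curve $D$ replaces that entire analysis.
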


\begin{proof}
  We will use the following simple fact. Suppose we have a possibly reducible curve $C$ in $X$ with negative self-intersection, and an irreducible curve $D$ such that $C\cdot D=0$. Then $C$ lies on the boundary of the Mori cone of curves and hence is an integer multiple of an irreducible negative curve. Indeed, the only effective curve classes in $X$ that multiply to zero must lie on the boundaries of the Mori cone and the nef cone.

  To find this curve $D$, we use the observation that on the $st$-plane the diamond of $f(x,y)$ meets the diamond of the negative curve defined by $\xi_1^{int}$ supported in $IT_{K-1}(K-3,1)$. Let us check that the curve $D\subset X$ defined by $\xi_1^{int}$ is orthogonal to $C$. We denote by $b_C, h_C$ the base length and the height of the triangle $\Delta$. Similarly, let $b_D, h_D$ be the base and height of the smallest triangle parallel to $\Delta$ that supports $\xi_1^{int}$. Then the classes of $C$ and $D$ are
  \begin{alignat*}{2}
    [C] &= H-mE, \\
    [D] &= \frac{b_D}{b_{C}} H - (K-2) E.
  \end{alignat*}
  Here $H$ is the class corresponding to the triangle $\Delta$, and $K-2$ is the order to which $\xi_1$ vanishes at $e$. Now using that $H\cdot H = b_C h_C$, we get
  \[ C\cdot D = b_D h_C - (K-2)m = b_D \frac{(K-2)(2K-1)}{2} - (K-2)m.\]
  We need to find $b_D$ and check that this intersection number is zero. The Newton polygon of $\xi_1^{int}$ is the triangle with vertices $(0,0),(K-3,0),(K-2,K-1)$, with the slope of the right edge $K-1$. To make the sides parallel to the sides of $\Delta$, we increase the left slope (which does not affect the base), and also increase the right slope to $K-\frac{1}{2}$. The new triangle will have base
  \[ b_D = (K-2) -\frac{2(K-1)}{2K-1}.\]
  Now one easily checks that $C\cdot D=0$.

  This leaves us with the possibility that $C$ is not irreducible, but it is a multiple of an irreducible curve. Let us rule this out. First notice that $C$ and $D$ must be disjoint. Indeed, if not, then $D$ would contain as a component the irreducible negative curve. But $D$ is irreducible and has positive self-intersection. Now since $D$ passes through the torus fixed points corresponding to the right and top vertices of the triangle $\Delta$, it follows that $C$ must not pass through these points, hence these lattice points are in the support of $f(x,y)$. From the fact that $1-y$ does not divide $f$ we get that the extreme left lattice point $(0,0)$ must also be in the support of $f$. In summary, the Newton polygon of $f$ has horizontal base of length $m-K+2$ and right edge of lattice length $\frac{K-2}{2}$. If $f=g^p$ for some integer $p>1$ then $p$ must divide the lengths of these sides. However, one easily checks that the lengths are relatively prime.
\end{proof}

Let us now turn to the case of odd $K$. We would like to prove as before that the Newton polygon of $f$ contains the two integral vertices and the highest lattice point $\frac{K-1}{2} (d,a)$. We can only prove this for the top lattice point and the left vertex by showing that $1-y$ does not divide $f$. This will be enough to prove that $f$ is irreducible.

\begin{lemma}
  Let $K$ be odd. Then $1-y$ does not divide $f$.
\end{lemma}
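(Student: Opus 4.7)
The plan is to argue by contradiction. Assume $1-y$ divides $f$, and write $f = (1-y)g$ for a nonzero polynomial $g \in k[x,y]$. First I would locate the support of $g$: since $f = g - yg$ is supported in $\Delta$, every monomial $x^iy^j$ of $g$ must satisfy both $(i,j) \in \Delta$ and $(i,j+1) \in \Delta$. Hence the support of $g$ is contained in the lattice polygon $\Delta'' := \Delta \cap (\Delta - (0,1))$. A direct computation reveals that $\Delta''$ is itself a triangle with horizontal base, left slope $s$, and right slope $t$, with vertices $(1/s,0)$, $(w,0)$, and $R := \{y = sx-1\} \cap \{y = t(x-w)\}$; in fact $\Delta''$ is the image of $\Delta$ under a homothety centered at the integer vertex $(w,0)$. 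Since $1-y$ vanishes to order exactly $1$ at $e = (1,1)$ and $f$ vanishes to order at least $m$, the polynomial $g$ vanishes to order at least $m-1$ at $e$.

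Next I would compute $|\Delta'' \cap \ZZ^2|$ via Pick's theorem, analogous to the count $|\Delta\cap\ZZ^2| = \binom{m+1}{2}+1$ used earlier in the section, to obtain $|\Delta'' \cap \ZZ^2| = \binom{m}{2}$. This sets up a square linear system: the polynomials supported on $\Delta''$ form a vector space of dimension $\binom{m}{2}$, and vanishing to order $m-1$ at $e$ imposes $\binom{m}{2}$ linear conditions on their coefficients. If those conditions are linearly independent, only $g = 0$ survives, yielding $f = 0$ and contradicting $f \neq 0$.

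The core algebraic step is therefore to show that these $\binom{m}{2}$ vanishing conditions are linearly independent on the space of polynomials supported on $\Delta''$. By a standard Cayley--Bacharach-type dualization, this is equivalent to showing that no nonzero polynomial $Q \in k[x,y]$ of total degree at most $m-2$ vanishes at every lattice point of $\Delta''$. I would approach this either (a) by a row-wise inductive argument, exploiting that at each height $y=j$ the lattice points of $\Delta''$ form a maximal run of consecutive integers whose length, combined with the degree bound on the restriction $Q(x,j)$, progressively forces $Q$ to vanish; or (b) by transferring the non-speciality of $\Delta$ itself (the fact that $\Delta$ supports only a one-dimensional family of polynomials vanishing to order $m$ at $e$) to $\Delta''$ via the homothety identified in the first paragraph.

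I expect the main obstacle to be precisely this independence verification, a combinatorial interpolation statement sensitive to the specific geometry of $\Delta''$. A subtle complication is that the point $e = (1,1)$ lies \emph{outside} $\Delta''$ (since $(1,2) \notin \Delta$ because $2 > s = a/d$ for $K \geq 4$), so the usual general-position heuristics for independence of Hermite interpolation data at an interior point do not apply directly, and care will be required to exploit the triangular structure correctly.
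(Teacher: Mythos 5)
Your reduction is set up correctly: the support of $g=f/(1-y)$ does lie in $\Delta''=\Delta\cap(\Delta-(0,1))$ (column by column, $f_i=(1-y)g_i$ forces $\operatorname{supp}(g_i)\subseteq[\operatorname{ord}f_i,\deg f_i-1]$), and the dualization of the resulting square linear system to the statement that no nonzero $Q$ of degree $\leq m-2$ vanishes on all lattice points of $\Delta''$ is standard. This is a genuinely different route from the paper's, which never attempts to kill $g$ outright: the paper uses only the $2K-5$ shortest columns of $\Delta$ to bootstrap $1-y\mid f$ into $(1-y)^{2K-5}\mid f$, and then gets a contradiction from intersection theory by writing $C=(2K-5)C_1+C_2$ with $C_1$ the nef class of $\{1-y=0\}$ and showing that $C_1\cdot C_2\geq 0$ reduces to $-22K+25\geq 0$, false for odd $K\geq 5$.

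The genuine gap is that your core step --- linear independence of the $\binom{m}{2}$ vanishing conditions on polynomials supported in $\Delta''$ --- is not proved, and neither sketch closes it. Route (a) fails as stated because it is row-wise: every row of $\Delta''$ has at most $m-K+2$ lattice points (the rows shrink going up since $t>s$), which is far fewer than the $m-1$ needed to annihilate a degree-$(m-2)$ restriction, so the induction never starts. A column-wise version does work --- peel off the columns of $\Delta''$ in decreasing order of size $m-1,m-2,\ldots,1$, extracting a factor $x-i$ at each step until the degree is exhausted --- but only if the columns of $\Delta$ have sizes exactly $1,1,2,3,\ldots,m$. That is precisely the lattice-point claim the paper states parenthetically without proof and deliberately sidesteps (``We will instead use a different argument that only needs the $2K-5$ shortest columns''); your count $|\Delta''\cap\ZZ^2|=\binom{m}{2}$ also silently relies on it, since it needs $\Delta$ to have exactly $m+1$ nonempty columns. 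Route (b) is not an argument: the homothety has non-integral ratio $1-\tfrac{1}{sw}$ and does not preserve the lattice, and the paper only establishes the count $|\Delta\cap\ZZ^2|=\binom{m+1}{2}+1$, not independence of the order-$m$ conditions on $\Delta$, so there is nothing to transfer. To complete your proof you would have to establish the exact column profile of $\Delta$, a nontrivial (if elementary) computation that the paper's intersection-theoretic argument is specifically designed to avoid.
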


\begin{proof}
  We start by showing that if $1-y$ divides $f$ then also $(1-y)^d = (1-y)^{2K-5}$ divides $f$. If $1-y$ divides $f$ then the lattice points in the Newton polygon of $f$ lie in fewer than $m$ columns. Writing $f=(1-y)g$, the lattice points in the support of $g$ are obtained by starting with the lattice points in the support of $f$ and removing one point from each column. If the number of columns decreases, we can repeat this step.

  The triangle $\Delta$ has left slope 
  \[ \frac{2K-3}{2K-5} = 1+\frac{1}{K-\frac{5}{2}}.\]
  Counting the lattice points in $\Delta$ in the columns $x=0,1,2,\ldots$, we get the numbers:
  \[ 1, 2, 3, \ldots, K-2, K, K+1, \ldots, 2K-4, 2K-2,\ldots.\]
  Counting similarly columns from the right we get
  \[ 1, K-1,\ldots.\]
  Assuming that one of the extreme left or right columns with $1$ lattice point is not in the support of $f$, when we order the remaining columns in $\Delta$ by the number of lattice points, we get $1,2,\ldots, 2K-4,\ldots$. Every time we remove one point from each column, we decrease the number of columns. We can do at least $2K-4$ such steps, which is more than $2K-5$ that we need. (One can in fact prove that the columns of lattice points in $\Delta$ have size exactly $1,1,2,3,\ldots, m$. This can be used to prove the claim of the lemma. We will instead use a different argument that only needs the $2K-5$ shortest columns.)

  Let $C_1$ be the irreducible curve in $X$ defined by $1-y$. Note that $C$ is irreducible with positive self-intersection, which implies that it lies in the nef cone. Now write $C=(2K-5)C_1 + C_2$ for an effective curve $C_2$. Then, since $C_1$ lies in the nef cone, $C_1\cdot C_2\geq 0$. We will check that this inequality is not satisfied, giving a contradiction to the assumption that $1-y$ divides $f$. This is a straight-forward computation.

  Let $H$ be the curve class in $X_\Delta$ corresponding to the triangle parallel to $\Delta$ and the left side integral of lattice length $1$. The triangle has base of length 
  \[ B_H= \frac{c}{b}\]
  and vertices
  \[ (0,0), \quad (B_H,0),\quad (d,a).\]
  We can compute the self-intersection number
  \[ H^2 = B_H a = \frac{ac}{b}.\]
  The class of $C_1$ can be written in terms of the class of $H$ as:
  \[ \frac{bd}{ac} H - E.\]
  The class of $C_2$ is 
  \[ [C]-d[C_1] = \left(\frac{K-1}{2}+\frac{1}{c} - \frac{bd^2}{ac}\right) H - (m-d)E.\]
  Now the inequality $C_1 C_2\geq 0$ is equivalent to 
  \[ \frac{K-1}{2}+\frac{1}{c} - \frac{bd^2}{ac} \geq \frac{m-d}{d} = \frac{K-3}{2}\]
  This can be simplified to the inequality
  \[ bd^2-a \leq ac.\]
  Writing it out in terms of $K$ we get
  \[ (2K-1)(2K-5)^2-(2K-3) \leq (2K-3)(4K^2-16K+1),\]
  Which simplifies to 
  \[ -22K+25\geq 0.\]
  Clearly the inequality is false when $K\geq 5$.
\end{proof}

\begin{lemma}
  Let $K$ be odd. Then $f$ is irreducible.
\end{lemma}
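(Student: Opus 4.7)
The approach is to follow the even-case argument with two simplifications. First, I would use the previous lemma together with Lemma~\ref{lemma-width} to locate the top lattice point of $\Delta_K$ in the support of $f$: since $(1-y)\nmid f$, the contrapositive of Lemma~\ref{lemma-width} shows that $f$ has $x$-degree at least $m$. A direct check shows that at $x=m$ the left and right edges of $\Delta_K$ are separated by a vertical gap of length $\tfrac{1}{2}$, with $y_{\mathrm{left}}(m)=\tfrac{(K-1)(2K-3)}{2}$ integral; hence the only lattice point of $\Delta_K$ with $x=m$ is $T:=\tfrac{K-1}{2}(d,a)$. Therefore $T$ lies in the support of $f$.

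Next, I transfer the neighbouring-diamond argument from the even case to force the lower-right vertex into the support. Take $D$ to be the irreducible negative curve defined by $\xi^{int}_{1}$ supported in $IT_{K-1}(K-3,1)$. Since the slopes $s$ and $t$ of $\Delta_K$ do not depend on the parity of $K$, the minimal triangle parallel to $\Delta_K$ supporting $\xi^{int}_{1}$ has the same base length $b_D=(K-2)-\tfrac{2(K-1)}{2K-1}$ as before, and the identical intersection calculation yields $C\cdot D=0$. Because $D$ is irreducible with $D^2>0$, $[D]$ is nef, and orthogonality to a class of negative self-intersection forces $[C]$ onto the extremal negative ray of the Mori cone, so $C=nC'$ for some irreducible negative curve $C'$ and some positive integer $n$. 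Moreover $C$ and $D$ must be disjoint (otherwise $D$ would contain a negative curve as a component, contradicting its irreducibility with $D^2>0$); since $D$ passes through the torus-fixed point at the bottom-right vertex of $\Delta_K$, disjointness forces $(m-K+2,0)$ to lie in the support of $f$.

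Finally, the segment from $(m-K+2,0)$ to $T$ has direction vector
\[ \bigl(K-2,\ \tfrac{(K-1)(2K-3)}{2}\bigr), \]
which is primitive: $\gcd(K-2,K-1)=\gcd(K-2,2K-3)=1$ and $K-2$ is odd, so dividing the second coordinate by $2$ introduces no common factor with $K-2$. Hence this segment is an edge of the Newton polygon of $f$ of lattice length $1$. If $f=g^p$ with $p>1$, then every edge of the Newton polygon of $f$ would have length divisible by $p$, a contradiction. Therefore $n=1$ and $f$ is irreducible. The main obstacle is verifying that the intersection and nef-cone bookkeeping of the middle step really does transfer verbatim from the even case to the odd case; once that is checked, the identification of $T$ via Lemma~\ref{lemma-width} and the coprimality of the right-edge direction vector are both short, and the detailed base-edge analysis used in the even proof is entirely avoided.
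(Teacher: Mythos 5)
Your opening step is sound and agrees with the paper: since $(1-y)\nmid f$ by the previous lemma, Lemma~\ref{lemma-width} forces the Newton polygon of $f$ to have width $m$, and the column $x=m$ of $\Delta_K$ contains only the single lattice point $T=\tfrac{K-1}{2}(d,a)$, so $T$ (and likewise $(0,0)$) lies in the support of $f$. The proof breaks at the middle step. The claim that ``the identical intersection calculation yields $C\cdot D=0$'' is false for odd $K$ --- the paper explicitly warns that the orthogonality $C\cdot D=0$ used in the even case ``is no longer true in the case of odd $K$.'' The slopes $s,t$ are indeed parity-independent, but the quantities entering the computation $C\cdot D=b_Dh_C-(K-2)m$ are not: for odd $K$ the ceiling raises $m$ by $\tfrac12$ and the top vertex is $\bigl(\tfrac{K-1}{2}+\tfrac{1}{c}\bigr)(d,a)$ rather than $\bigl(m,\tfrac{(K-2)b}{2}\bigr)$, so both $h_C$ and $m$ change. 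Concretely, for $K=5$ one has $c=31$, $h_C=\tfrac{441}{31}$, $b_D=\tfrac{19}{9}$, $m=10$, and $C\cdot D=\tfrac{19}{9}\cdot\tfrac{441}{31}-30=\tfrac{1}{31}>0$. With $C\cdot D>0$ you cannot place $[C]$ on the extremal negative ray, cannot conclude that $C$ and $D$ are disjoint, and therefore cannot force the bottom-right vertex $(m-K+2,0)$ into the support of $f$; the concluding coprimality argument then has no premise to act on (and ruling out $f=g^p$ is in any case weaker than irreducibility unless one has already shown $C$ is a multiple of an irreducible curve).

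For comparison, the paper keeps $D$ (defined by $\xi_1^{int}$) but uses it differently: suppose $f=f_1f_2$. The left edge of the Newton polygon of $f$, from $(0,0)$ to $T$, is integral of lattice length $\tfrac{K-1}{2}$ and must decompose as a Minkowski sum of the corresponding edges of the factors, giving $[C_i]=\bigl(w_i+\tfrac{\varepsilon_i}{c}\bigr)H-dw_iE$ with $w_1+w_2=\tfrac{K-1}{2}$, $\varepsilon_i\in\{0,1\}$ and $\varepsilon_1+\varepsilon_2=1$. The factor with $\varepsilon=0$, say $C_1$, has negative self-intersection, and a short computation gives $C_1\cdot D=B_Dw_1a-dw_1(K-2)<0$, contradicting the fact that the irreducible curve $D$ with $D^2>0$ is nef. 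So the auxiliary curve $D$ is used as a nef class testing a putative factor of $C$, not as a curve orthogonal to $C$; that is the modification your plan is missing.
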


\begin{proof}
  Suppose that $f$ is reducible, $f=f_1 f_2$, corresponding to $C=C_1+C_2$. 
  The Newton polygon of $f$ contains the left vertex $(0,0)$ of $\Delta$ and its highest lattice point $\frac{K-1}{2}(d,a)$, which also lies on the left edge. This integral edge of the Newton polygon of $f$ must be the sum of the corresponding integral edges of the Newton polygons of $f_1$ and $f_2$. It follows that the Newton polygons of $f_1$ and $f_2$ must have lower left vertex $(0,0)$, the highest lattice point on the left edge $w_i (d,a)$ for integers $w_1, w_2$ such that $w_1+w_2=\frac{K-1}{2}$. 

  Let $H$ be the curve class in $X_\Delta$ as in the previous proof, corresponding to a triangle with an integral left edge of lattice length $1$. Then the class of $C$ is 
  \[ [C] = \left(\frac{K-1}{2}+\frac{1}{c}\right)H - m E\]
  Since $1-y$ does not divide $f$, the Newton polygon of $f$ has width exactly $m=d\frac{K-1}{2}$. Then it must also be true that the width of the Newton polygon of $f_i$ is equal to its vanishing order. Hence, we may write the classes
  \[ [C_1] = \left(w_1+\frac{\varepsilon_1}{c}\right) H - dw_1 E,\]
  \[ [C_2] = \left(w_2+\frac{\varepsilon_2}{c}\right) H - dw_2 E,\]
  where $w_1, w_2 \geq 1$ are integers, $w_1+w_2=\frac{K-1}{2}$, and $\varepsilon_1 ,\varepsilon_2 \geq 0$ are rational numbers such that $\varepsilon_1 + \varepsilon_2 = 1$. 

  Another restriction we need to have is that the triangle corresponding to the class $\left(w_i+\frac{\varepsilon_i}{c}\right) H$ must be the smallest triangle parallel to $\Delta$ that contains the Newton polygon of $f_i$. This implies that the triangle must contain at least one lattice point on the right edge, and that means that $\varepsilon_i$ can only be $0$ or $1$. Let us assume that $\varepsilon_1 = 0$ and $\varepsilon_2=1$. Then $C_1$ has negative self-intersection and $C_2$ positive self-intersection.

  In the proof for the even $K$ we used the fact that the irreducible curve $D$ defined by $\xi_1^{int}$ has zero intersection with the negative curve. This is no longer true in the case of odd $K$. However, $D$ is an irreducible curve in $X$, and since it has positive self-intersection, it must lie in the nef cone. We get a contradiction to reducibility of $f$ by showing that $D\cdot C_1<0$.

  We compute the class of $D$ as in the case of even $K$. Let $B_D$, $B_H$ be the lengths of the bases of the triangles corresponding to the classes $D, H$. Then 
  \[ [D] = \frac{B_D}{B_H} H -(K-2)E.\]
  Here as before
  \[ B_D = (K-2) -\frac{2(K-1)}{2K-1}. \]
  Now using that $H^2= B_H a$, we compute
  \[ C_1\cdot D = \frac{B_D}{B_H} w_1 H^2 - dw_1(K-2) = B_D w_1 a - dw_1(K-2).\]
  The claim that this product is negative is equivalent to 
  \[ B_D a < d (K-2),\]
  which one checks easily.
\end{proof}

\subsection{Additional negative curves}

We list here negative curves that do not fit into the infinite families described above. These examples were found with the help of a computer. We only list non-special negative curves. The section below treats the special negative curves.

The finite set of negative curves is listed in Table~\ref{tab-add}. The columns of the table contain the vanishing order $m$, the slopes $(s,t)$ of the triangle, and the $\PP(a,b,c)$ whose blowup contains the negative curve. For some triangles it is more natural to give the triangle rather than the $\PP(a,b,c)$. For example, the negative curve with vanishing order $m=8$ is supported in a triangle that is three times the integral triangle with vertices $(0,0),(1,0),(3,7)$. This curve also appears as a negative curve in the blowup of $\PP(7,65,66)$, defined by a homogeneous polynomial of degree $1386$. The curve with $m=5$ is supported in a triangle with slopes $s=13/4, t=13/3$. This triangle does not correspond to a weighted projective plane. The toric variety is the quotient of $\PP^2$ by the cyclic group of order $13$. Deforming the right side slightly will give us a weighted projective plane $\PP(13, 186, 185)$. The negative curve is defined by a polynomial of degree $3344$ in the homogeneous coordinate ring.

\begin{table}[t]
  \begin{tabular}{| c | c | c | c | c |}
    \hline
    $m$ & $(s,t)$ & Triangle & WPP & degree\\ 
    \hline
    \hline 
    $4$ & $(2.25, 3.5)$ & & $\PP(7,9,10)$ & $100$ \\
    $5$ & $(3.25, 4.3\overline{3})$ & $\left[\left(-\frac{4}{13},0\right),\left(\frac{14}{13},0\right),\left(\frac{68}{13},18\right)\right]$ & $\PP(13, 186, 185)$ & $3344$ \\
    $6$ & $(1.8,3.5)$ & & $\PP(7,9,17)$ & $196$ \\
    $6$ & $(2.2, 3.5)$ & & $\PP(7,11,13)$ & $189$ \\
    $7$ & $(1.6\overline{6}, 3.6\overline{6})$ & & $\PP(5,11,18)$ & $220$\\
    $8$ & $(2.3\overline{3}, 3.5)$ & $3*[(0,0),(1,0),(3,7)]$ & $\PP(7,65,66)$ & $1386$\\
    $9$ & $(1.429, 4.5)$ & & $\PP(9, 10, 43)$ & $559$\\
    $11$ & $(1.6\overline{6}, 3.3\overline{3})$ & $2*[(0,0),(3,0),(6,10)]$ & $\PP(5, 47, 71)$ & $1420$\\
    $12$ & $(1.75, 3.3\overline{3})$ & & $\PP(7,10,19)$ & $437$ \\
    $15$ & $(1.75, 3.5)$ & $4*[(0,0),(2,0),(4,7)]$ & $\PP(7,114,227)$ & $6384$ \\
    $16$ & $(1.6\overline{6}, 3.714)$ & & $\PP(5, 26, 43)$ & $1196$ \\
    $21$ & $(1.3, 5.5)$ & & $\PP(11,13,84)$ & $2301$ \\
    $22$ & $(1.286, 5.6\overline{6})$ & & $\PP(9,17,92)$ & $2610$ \\
    $26$ & $(1.6\overline{6}, 3.75)$ & $3*[(0,0),(5,0),(9,15)]$ & $\PP(5, 139, 232)$ & $10440$ \\
    $31$ & $(1.4, 4.75)$ & & $\PP(7, 19,67)$ & $2926$ \\
    $99$ & $(1.4, 4.6\overline{6})$ & $10*[(0,0),(7,0),(10,14)]$ & & \\
    \hline
  \end{tabular}
  \\[2ex]
  \caption{Additional negative curves. }
  \label{tab-add}
\end{table}

One can list all negative curves up to isomorphism for small values of $m$. This can be done by classifying all integral polygons containing a small number of lattice points (see, for example, \cite{Castryck12}), and checking which one of them supports a negative curve. We list here all negative curves up to $m=6$.

For $m=1$ there is a unique negative curve defined by the polynomial $1-y$. For $m=2$ there is also a unique curve with coordinates $s=1.5, t=3$ on the map. This is the curve that is supported in $IT_3(1,1)$, $RT_4(2,1)$, $RT_4(2,3)$.

For $m=3$ there are two negative curves. The first one has coordinates $(1.3\overline{3},4)$ and is supported in $IT_4(2,1)$, $RT_5(3,1)$. The other one has coordinates $(1.6\overline{6}, 4)$ and is supported in $RT_4(3,2)$.  

For $m=4$ there are four non-isomorphic curves. These are $(1.25, 5)$ in $IT_5(3,1)$, $(1.75,4)$ in $RT_4(4,3)$, the curve in the table above, and the first curve for $K=4$ in the new family described in the previous section. 

For $m=5$ there are four curves, $(1.2,6)$ in $IT_6(4,1)$, $(1.6,4)$ in $IT_4(3,2)$, $(1.8, 4)$ in $RT_4(5,4)$, and the curve in the table. 

For $m=6$ there are again four curves, in $IT_7(5,1)$, $RT_4(6,5)$, and the two curves in the table.

For $m=7$ we have found four curves, in $IT_8(6,1)$, $RT_4(7,6)$, $IT_4(4,3)$, and the one in the table. There may be more curves with $m=7$.

\section{Special negative curves}
Consider a triangle $\Delta$ supporting a negative curve with equation $\xi$ vanishing to order $m$ at $e$. Recall that we call $\xi$ (or the negative curve $C$ it defines) special if $\Delta$ contains less than ${m+1\choose2}+1$ lattice points. Its deficiency is the difference between ${m+1\choose 2}+1$ and the number of lattice points in $\Delta$.

Kurano and Matsuoka first discovered two examples of special negative curves with deficiency $1$ by numerical methods. These live in the blowups of $\PP(8,15,43)$ and $\PP(5,33,49)$, vanish to orders $m=9$ and $m=18$ and lie in degrees $645$ and $1617$, respectively.

In this section we present the construction of an infinite family of special negative curves. This family generalizes the $m=9$ curve of Kurano-Matsuoka and has the notable property that the deficiency of its members becomes arbitrarily large.

\begin{theorem}\label{thm-special}
  Let $K\geq 4$ be an integer and define $N=K-2$ and $M=N^{2}-1$. Consider the weights
  \[
    (a,b,c) = \left(KN,KN^{2}-1, KMN^{2}-(M+N)\right).
  \]
  Then, $X=\Bl_{e}\PP(a,b,c)$ is a MDS.

  The negative curve $C$ in $X$ is supported in the triangle $\Delta$ with top vertex $(N(M+N)-1,KN^{2}-1)$, bottom RHS vertex $(MN-1,0)$ and LHS slope $\frac{KN}{M+N}$. It's bottom LHS vertex is $\left( -\frac{K-1}{KN},0\right)$.
  Furthermore, this curve is special, vanishes to order $m=N(M+N)-1$ at $e$ and has deficiency $\frac{(K-2)(K-3)}{2}$.
\end{theorem}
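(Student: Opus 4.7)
The plan centers on the fact that $(M,N)=(M_2,N_2)$ satisfies the IT equation $(M+N)^2=KMN+1$, so the IT polynomial $\xi^{int}_2$ from Subsection~\ref{subsection-IT-RT} is available as an auxiliary object. Its Newton polygon $IT_K(M,N)$ shares the LHS slope $KN/(M+N)$ of $\Delta$, placing the present triangle on the left boundary of the $IT_K(M_2,N_2)$ diamond in the $(s,t)$-plane---this is precisely the ``diamond intersection'' mechanism highlighted in the introduction. Let $D$ denote the strict transform in $X$ of the irreducible curve cut out by $\xi^{int}_2$. Since $D^{2}=2\,\mathrm{Area}(IT_K(M,N))-(M+N)^{2}=KMN-(M+N)^{2}=-1$, $D$ is a $(-1)$-curve, and I will use it both as the MDS-partner to $C$ and as the lever for ruling out factorizations of the defining polynomial of $C$.

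The proof will proceed in the following order. \textbf{(i)} Verify the stated geometric data of $\Delta$ (base $c/a$, height $b$, area $bc/(2a)$, LHS slope $KN/(M+N)$, RHS slope $K-1/N^{2}$) and deduce the arithmetic identity $am^{2}-bc=N+1$, so any curve in the class $H-mE$ has self-intersection $-(N+1)/a<0$. \textbf{(ii)} Compute the torus intersection $\bar{C}\cdot\bar{D}=2\,\mathrm{MA}(\Delta,IT_K(M,N))$ by shoelace applied to the Minkowski sum $\Delta+IT_K(M,N)$, a quadrilateral whose four vertices can be written down explicitly from the vertex data of the two triangles; this value equals $m(M+N)$, so $C\cdot D=0$ in $X$. \textbf{(iii)} Count the lattice points of $\Delta$ column-by-column from $x=0$ up to $x=N(M+N)-1$ (equivalently, apply Pick's theorem to the convex hull of $\Delta\cap\ZZ^{2}$, whose vertices can be listed explicitly) and check equality with $\binom{m+1}{2}+1-\binom{K-2}{2}$, certifying the specialty and deficiency of $C$. \textbf{(iv)} Construct an explicit polynomial $\xi$ supported in $\Delta$ vanishing to order $m$ at $e$, via a recursive identity in the spirit of Equations~\eqref{eqn.recurrence.relations} built around $\xi^{int}_2$ and $\xi^{rat}_2$. \textbf{(v)} Prove irreducibility of $\xi$ in the style of Subsection~\ref{sec-new-nonspecial}: any effective decomposition $C=C_1+C_2$ satisfies $C_i\cdot D\geq 0$ since $D$ is irreducible and distinct from each $C_i$, and $C\cdot D=0$ forces $C_i\cdot D=0$; combined with the Minkowski-sum constraint $P_{C_1}+P_{C_2}=\Delta$ (restricting the edge slopes of each $P_{C_i}$ to the three slopes of $\Delta$) and integrality of the classes $[C_i]=\alpha_iH-b_iE$, no nontrivial factorization survives. \textbf{(vi)} Conclude MDS by Cutkosky's criterion: $C$ and $D$ are distinct irreducible curves with $C\cdot D=0$ and hence disjoint.

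I expect step \textbf{(iv)}, the explicit construction of $\xi$, to be the main obstacle. The Newton polygons of $\Delta$ and $N\cdot IT_K(M,N)$ have different right slopes ($K-1/N^{2}$ versus $K$), so $\xi$ cannot simply be extracted as a factor of $(\xi^{int}_2)^N$, and no obvious direct factorization is available. Moreover the deficiency $\binom{K-2}{2}$ grows quadratically with $K$, so the linear system for $\xi$ on the monomials supported on $\Delta\cap\ZZ^{2}$ is increasingly over-determined; its solvability is genuinely arithmetic and must be secured by producing the right recursive identity tying $\xi$ to $\xi^{int}_2$ and $\xi^{rat}_2$. Once (iv) is in hand, step (v) follows the template of the even-$K$ argument in Subsection~\ref{sec-new-nonspecial}, and the remaining verifications are routine calculations, with the lattice enumeration in (iii) being the secondary piece of extended computation.
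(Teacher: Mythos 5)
Your outline correctly identifies the global architecture (an auxiliary curve $D$ coming from a neighbouring diamond, specialty via a lattice-point count against $\binom{m+1}{2}+1$, Pick's theorem, Cutkosky's criterion), but the proof has a genuine gap at exactly the point you flag as the ``main obstacle'': step \textbf{(iv)}, the existence of the defining polynomial $\xi$. Because the curve is special, the linear system for $\xi$ is over-determined by $\binom{K-2}{2}$ conditions, so no dimension count can produce it; the entire theorem stands or falls on writing down an explicit identity, and you do not supply one. The paper's construction is
\[
x(1-y)\,\Upsilon_{K}=\bigl(\xi_{2,K}^{int}\bigr)^{N}-\bigl(\xi_{1,K-1}^{int}\bigr)^{M+N},
\]
i.e.\ a difference of powers of \emph{two $IT$ polynomials at different values of the right-hand slope} ($K$ and $K-1$), divided by $x(1-y)$. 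Your guess that the identity should be ``built around $\xi^{int}_2$ and $\xi^{rat}_2$'' (both at slope $K$) points at the wrong pair of inputs, and nothing in your outline would lead you to the cross-slope combination that actually works. Without this, steps (iii), (v) and (vi) have nothing to operate on.

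Two secondary problems. First, your computation $D^{2}=2\,\mathrm{Area}(IT_K(M,N))-(M+N)^{2}=-1$ is carried out in the wrong variety: in $X=\Bl_e X_\Delta$ the class of the strict transform of the $\xi^{int}_2$-curve is $\alpha H-(M+N)E$ where $\alpha H$ corresponds to the \emph{smallest triangle parallel to $\Delta$} containing $IT_K(M,N)$, which is strictly larger than $IT_K(M,N)$ since the right slopes differ ($K$ versus $K-1/N^{2}$); the value $-1$ is the self-intersection in $\Bl_e X_{IT_K(M,N)}$, not in $X$, and for $D$ to serve as the Cutkosky partner it must in fact be nef in $X$. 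Second, in step \textbf{(v)} the conclusion $C_i\cdot D=0$ for each component only places each $C_i$ on the extremal face dual to $D$; it still allows $C$ to be a proper multiple of an irreducible curve, a case the paper excludes separately (for the non-special family by a gcd argument on edge lattice lengths, and for $\Upsilon_K$ by exhibiting an edge of lattice length $1$ in its Newton polygon and invoking the Minkowski-sum criterion). The paper's irreducibility proof for $\Upsilon_K$ is in fact purely combinatorial -- it never uses intersection numbers -- and is only available once the explicit identity above pins down the coefficients at the relevant vertices.
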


For $K=4$ one recovers Kurano and Matsuoka's $(a,b,c)=(8,15,43)$ example. The first new curve in the family appears as soon as $K>4$. For $K=5$ we obtain a negative curve with $m=32$ and deficiency $3$, living in $\Bl_{e}\PP(15, 44, 349)$. For $K=6,7$ the weights are $(a,b,c)=(24, 95, 1421),(35, 174, 4171)$ with negative curves having $m=75,144$ and deficiencies $6$ and $10$, respectively.

\begin{remark}
  Unlike the construction of the infinite family presented above, this one is constructed by explicitly describing its defining equations. In the notation of Subsection~\ref{subsection-IT-RT}, the values of $M$ and $N$ in the theorem correspond to $M=M_{2}$ and $N=N_{2}$ with RHS slope $K$.

  Below we will use the polynomials $\xi_{2,K}^{int}$ and $\xi^{int}_{1,K-1}$ for the proof; here the first subindex corresponds to $n$ as in Subsection~\ref{subsection-IT-RT}, while the second one is the RHS slope of the corresponding triangle. More concretely, the first one of these polynomials is supported on the triangle $IT_{K}(M_{2},N_{2})=IT_{K}((K-2)^{2}-1,K-2)$, having RHS slope $K$. On the other hand, $\xi_{1,K-1}^{int}$ is supported on the triangle $IT_{K-1}(M_1,N_{1}) = IT_{K-1}(N_{2}-1,1)$ and has RHS slope $K-1$.
\end{remark}

The polynomials $\Upsilon_{K}$ giving the negative curves in the theorem are defined via the equation
\begin{align}\label{eqn-special}
  x(1-y)\Upsilon_{K}=\left(\xi_{2,K}^{int}\right)^{N} - \left(\xi_{1,K-1}^{int}\right)^{M+N}.
\end{align}
The polynomials $\xi_{2,K}^{int}$ and $\xi_{1,K-1}^{int}$ vanish to order $M+N$ and $N$ at $e$, respectively. It follows that, if it exists, $\Upsilon_{K}$ vanishes to order $m=N(M+N)-1$ at $e$.

To simplify the notation define the auxiliary polynomials
\[
  \Upsilon'_{K} := \left(\xi_{2,K}^{int}\right)^{N} - \left(\xi_{1,K-1}^{int}\right)^{M+N}.
\]
%

The outline of this section is as follows. We begin by showing that $x(1-y)$ divides $\Upsilon_{K}'$, so that the polynomials $\Upsilon_{K}$ are well defined. The irreducibility of $\Upsilon_{K}$ is a direct consequence of this. The remainder of the section is devoted to computing the deficiency of these curves.

Since $\Upsilon_{K}'=x\Upsilon_{K}-xy\Upsilon_{K}$, the Newton polygon of $xy\Upsilon_{K}$ can be obtained from that one of $\Upsilon_{K}'$ by removing the lowest lattice point on each column of its support. We use this in the proof of Proposition~\ref{se-prop-np} to compute the Newton polygon of $\Upsilon_{K}$. Having described the Newton polygon of $\Upsilon_{K}$, computing its deficiency is a direct application of Pick's theorem.

\begin{lemma}
  The polynomial $\Upsilon_{K}'$ is divisible by $x$ and $1-y$ for every $K\geq 4$.
\end{lemma}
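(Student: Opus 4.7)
The plan is to verify each divisibility by direct substitution. Since $\Upsilon_K' = (\xi_{2,K}^{int})^N - (\xi_{1,K-1}^{int})^{M+N}$, it suffices to show that $\Upsilon_K'(0,y)\equiv 0$ and $\Upsilon_K'(x,1)\equiv 0$.

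For divisibility by $x$, I would use the fact that each polynomial $\xi_n^{int}$ is supported in the triangle $IT_K(M_n, N_n)$ with vertices $(0,0),(M_n,0),(M_n+N_n, KN_n)$. The left edge passes through the origin with slope $KN_n/(M_n+N_n)$, so the only lattice point of this triangle on the axis $x=0$ is $(0,0)$ itself. Consequently $\xi_n^{int}(0,y)$ equals the constant term, which by normalization is $1$. Therefore both $\xi_{2,K}^{int}(0,y)$ and $\xi_{1,K-1}^{int}(0,y)$ equal $1$, and their respective powers agree, giving $\Upsilon_K'(0,y)=0$.

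For divisibility by $1-y$, I would restrict to $y=1$. The polynomial $\xi_{n,K}^{int}(x,1)$ has degree at most $M_n+N_n$ in $x$ (the width of $IT_K(M_n,N_n)$), and since $\xi_{n,K}^{int}$ vanishes to order $M_n+N_n$ at $e=(1,1)$, its restriction $\xi_{n,K}^{int}(x,1)$ vanishes to order $\geq M_n+N_n$ at $x=1$. The two conditions force
\[
\xi_{n,K}^{int}(x,1) = \varepsilon^{int}_{n,K}\,(x-1)^{M_n+N_n},
\]
where $\varepsilon^{int}_{n,K}$ is the leading coefficient of the top monomial $x^{M_n+N_n}y^{KN_n}$ (the unique lattice point in the top column). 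Specializing at $n=2$ for parameter $K$ and at $n=1$ for parameter $K-1$ (where $M_1+N_1=K-2=N$ and $M_2+N_2=M+N$), I obtain
\[
\bigl(\xi_{2,K}^{int}(x,1)\bigr)^{N} = (\varepsilon^{int}_{2,K})^{N}\,(x-1)^{N(M+N)},\qquad
\bigl(\xi_{1,K-1}^{int}(x,1)\bigr)^{M+N} = (\varepsilon^{int}_{1,K-1})^{M+N}\,(x-1)^{N(M+N)}.
\]
The remaining task is to check that the two scalar factors coincide. Using the explicit sign rules recalled in Subsection~\ref{subsection-IT-RT}, one finds $\varepsilon^{int}_{2,K}=-1$ in every case, while $\varepsilon^{int}_{1,K-1}=1$ if $K$ is even and $-1$ if $K$ is odd; one then verifies, using that $M+N=(K-2)(K-1)-1$ is odd, that the parities match, so the scalar factors are equal. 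This yields $\Upsilon_K'(x,1)=0$.

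The only delicate step is this sign bookkeeping: it requires tracking the parity of $N=K-2$ and $M+N=(K-2)(K-1)-1$ against the sign conventions of Subsection~\ref{subsection-IT-RT}. Everything else is a direct consequence of the geometry of the Newton triangle and the order of vanishing at $e$.
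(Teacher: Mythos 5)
Your proof is correct and follows essentially the same route as the paper: both divisibilities are verified by restricting to $x=0$ and $y=1$, using that the origin is the only lattice point of the supporting triangles on the $y$-axis and that the vanishing order at $e$ together with the width bound forces each restriction to be a scalar multiple of a power of $1-x$. The only difference is that you normalize that scalar by the leading coefficient, which creates the sign bookkeeping you flag as the delicate step (your sign computation does check out); the paper instead normalizes by the constant term, which is $1$ for every $\xi^{int}$, so both restrictions equal $(1-x)^{N(M+N)}$ on the nose after taking powers and no sign analysis is needed.
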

\begin{proof}
  By definition, every $\xi^{int}$ is normalized to have constant term $1$, so clearly $x$ divides $\Upsilon_{K}'$. To see that $1-y$ divides $\Upsilon'_{K}$ note that $\xi_{1,K-1}^{int}|_{y=1} = (1-x)^{N}$ and $\xi_{2,K}^{int}|_{y=1} = (1-x)^{M+N}$. Indeed, the monomials $x^{N}y^{N+1}$ and $x^{M+N}y^{KN}$ are part of the support of $\xi_{1,K-1}^{int}$ and $\xi_{2,K}^{int}$, respectively, with coefficients $\pm 1$. Then, since $\xi_{2,K}^{int}\in (1-x,1-y)^{M+N}$ and $\xi_{1,K-1}^{int}\in (1-x,1-y)^{N}$, the result follows.
\end{proof}

\begin{proposition}\label{se-prop-irred}
  The polynomial $\Upsilon_{K}$ is irreducible.
\end{proposition}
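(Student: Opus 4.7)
The plan is to combine intersection theory on $X = \Bl_{e}X_{\Delta}$ with a computation of the Newton polygon of $\Upsilon_{K}$.

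First, I would verify that the curve $C = V(\Upsilon_{K})$ has class $H - mE$ and strictly negative self-intersection $C\cdot C = 2\operatorname{Area}(\Delta) - m^{2} < 0$, by a direct computation from the vertices of $\Delta$ in Theorem~\ref{thm-special} and the formula $m = (K-1)^{2}(K-3)$. Since $X$ has Picard rank $2$ and $C$ is effective with $C^{2} < 0$, $X$ admits a unique non-exceptional irreducible negative curve $C_{0}$, which must appear as a component of $V(\Upsilon_{K})$.

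Next, I would identify the extremal vertices of the Newton polygon of $\Upsilon_{K}$ directly from the defining equation $x(1-y)\Upsilon_{K} = (\xi_{2,K}^{int})^{N} - (\xi_{1,K-1}^{int})^{M+N}$. The identity $KN^{2} - (K-1)(M+N) = 1$, which follows from $M = N^{2}-1$ and $N = K-2$, shows that the top vertex of $(\xi_{2,K}^{int})^{N}$ at $(N(M+N), KN^{2})$ sits exactly one unit above the top vertex of $(\xi_{1,K-1}^{int})^{M+N}$ at $(N(M+N), KN^{2}-1)$. Hence the top monomial of $\Upsilon_{K}'$ is that of $(\xi_{2,K}^{int})^{N}$, with coefficient $\pm 1$. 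Similarly the bottom-right of $(\xi_{2,K}^{int})^{N}$ at $(MN, 0)$ lies $N-1 > 0$ units to the right of the bottom-right of $(\xi_{1,K-1}^{int})^{M+N}$ at $((N-1)(M+N), 0)$. Dividing by $x(1-y)$ places the top vertex of the Newton polygon of $\Upsilon_{K}$ at $(N(M+N)-1, KN^{2}-1)$ and its bottom-right vertex at $(MN-1, 0)$. The right edge therefore has direction vector $(N^{2}, KN^{2}-1)$ with $\gcd(N^{2}, KN^{2}-1) = \gcd(N^{2}, 1) = 1$, so it is primitive of lattice length $1$.

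Finally, write $\Upsilon_{K} = \xi_{0}^{a}h$ where $\xi_{0}$ is the irreducible polynomial defining $C_{0}$, $a \geq 1$, and $\gcd(\xi_{0}, h) = 1$. If $h$ is constant then $\Upsilon_{K} = c\xi_{0}^{a}$, and since the Newton polygon of $\xi_{0}^{a}$ is $a$ times that of $\xi_{0}$, every edge must have lattice length divisible by $a$; the primitive right edge forces $a = 1$, giving $\Upsilon_{K}$ irreducible. To rule out $h$ non-constant, one combines the Minkowski size identity $a\lambda_{0} + \mu = 1$ (for the smallest bounding triangles parallel to $\Delta$ of $N(\xi_{0})$ and $N(h)$) with the Mori cone constraint $V(h)^{2} \geq 0$ (since $V(h)$ has no negative-curve components) and the fact that the Newton polygon of $\Upsilon_{K}$ reaches the apex of $\Delta$; these conditions together force $a = 1$, $\lambda_{0} = 1$, and $\mu = 0$, so $h$ must be a nonzero constant. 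The main obstacle is this last step, coupling Newton polygon rigidity with the intersection theory: the primitive right edge is the key combinatorial ingredient that rules out any nontrivial factorization.
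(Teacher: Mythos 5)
Your Newton-polygon computations are correct and reproduce the combinatorial heart of the paper's proof: the paper likewise shows that the coefficients of $\Upsilon_K'$ at $A'=(MN,0)$, at $B'=N\cdot(M+N,KN)$ and at $Q'=B'-(0,1)$ are $\pm 1$, and that the edge $\overline{A'Q'}$ has lattice length $1$ because $\overline{A'B'}$ has integral slope $K$; dividing by $x(1-y)$ this is exactly your primitive right edge from $(MN-1,0)$ to $(N(M+N)-1,KN^2-1)$. The gap is in your final step. The three conditions you claim suffice --- the size identity $a\lambda_0+\mu=1$, the inequality $V(h)^2\ge 0$, and the Newton polygon reaching the apex of $\Delta$ --- do not by themselves force $\mu=0$. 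For instance $a=1$, $\lambda_0=\mu=\tfrac12$, $m_0=\tfrac m2+1$, $m_h=\tfrac m2-1$ is numerically consistent with all three (using $H^2=m^2-\tfrac{K-1}{KN}$, one checks $\mu^2H^2\ge m_h^2$). More fundamentally, no purely intersection-theoretic argument can finish the job, because a triangle can perfectly well support a \emph{reducible} effective class $H-mE$ of negative self-intersection; the uniqueness of the negative curve $C_0$ and the nefness of $V(h)$ are detours here.

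What actually closes the argument --- and what the paper outsources to \cite[Lemma~2.3]{GGK3} as ``a standard argument using Minkowski sums'' --- is an integrality statement that uses \emph{both} endpoints of the primitive edge, not only the apex. If $\Upsilon_K=fg$, then $N(\Upsilon_K)=N(f)+N(g)$, and the minimal triangles $T_f,T_g$ parallel to $\Delta$ containing $N(f),N(g)$ satisfy $T_f+T_g=\Delta$, with sizes $\sigma+\tau=1$. Since $A$ and $B$ are vertices of $\Delta$ that lie in $N(\Upsilon_K)$, each of $N(f)$ and $N(g)$ must contain the two corresponding vertices of its own bounding triangle (compare support functions at the two relevant normal directions); the difference of those two lattice points is $\sigma(N^2,KN^2-1)$, respectively $\tau(N^2,KN^2-1)$. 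Primitivity of $(N^2,KN^2-1)$ then forces $\sigma,\tau\in\ZZ_{\ge 0}$, so one of them is $0$ and the corresponding factor is a monomial. You have already established every ingredient needed for this ($A$, $B$ in the support with coefficients $\pm1$, and $\gcd(N^2,KN^2-1)=1$); replace the intersection-theoretic assembly in your last paragraph with this two-vertex integrality argument and the proof is complete.
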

\begin{proof}
  Let $\Delta OA'B'$ denote the triangle $N\cdot IT_{K}(M,N)$ and $\Delta OP'Q'$ the triangle $(M+N)\cdot IT_{K-1}(N-1,1)$. Here $O$ is the origin and vertices are written in counterclockwise order; see Figure~\ref{lemma-triangles}. 
  
  The coefficients of $\Upsilon_{K}'$ at $A'$ and $B'$ come uniquely from those of $\left(\xi_{2,K}^{int}\right)^{N}$. Indeed, LHS slope of $IT_{K}(M,N)$ is larger than that one of $IT_{K-1}(N-1,1)$, and the base of $N\cdot IT_{K}(M,N)$ is larger than that one of $(M+N)\cdot IT_{K-1}(N-1,1)$. By \cite[Paragraph~6.2]{GGK3}, the coefficients of $\xi_{2,K}^{int}$ at all of its vertices are $\pm1$, so the coefficients of $\Upsilon_{K}'$ at $A'$ and $B'$ are also $\pm 1$.

  Notice that $B'=Q'+(0,1)$, so that the coefficient at $Q'$ is the negative of that one at $B'$, this is, $\pm1$. Furthermore, by the same token the edge $\overline{A'Q'}$ must be an edge of the support of the $x\Upsilon_{K}$ term of $\Upsilon_{K}'$. Then, by a standard argument using Minkowski sums, $\Upsilon_{K}$ will be irreducible if we can prove that this edge has lattice length $1$, see for example \cite[Lemma~2.3]{GGK3}.

  The fact that $\overline{A'Q'}$ has length $1$ is immediate once we note that $\overline{A'B'}$ has integral slope $K$ and $B'=Q'+(0,1)$.
\end{proof}

The remainder of this section will be concerned with computing the deficiency of $\Upsilon_{K}$. For this we will describe the Newton polygon of $\Upsilon_{K}'$ and then use Pick's theorem to obtain the result.

\begin{lemma}\label{lemma-u-prime}
  After removing the origin, the convex hull of the remaining lattice points in the triangles $N\cdot IT_{K}(M,N)$ and $ (M+N)\cdot IT_{K-1}(N-1,1)$ has vertices $A',Q',B',C',D',(1,1)$ and $(1,0)$, where
  \begin{align*}
    A'&=\text{bottom RHS vertex of }N\cdot IT_{K}(M,N)=(MN,0),\\
    Q'&=\text{top vertex of }(M+N)\cdot IT_{K-1}(N-1,1)=(N(M+N),(N+1)(M+N)),\\
    B'&=\text{top vertex of }N\cdot IT_{K}(M,N)=(N(M+N),KN^{2}),\\
    C'&=\text{top vertex of }IT_{K}(M,N)=(M+N, KN)\text{ and}\\
    D'&=\text{top vertex of }IT_{K-1}(N-1,1)=(N,N+1).
  \end{align*}
  See Figure~\ref{lemma-triangles}. In particular, the polynomial $\Upsilon_{K}'$ is supported in this polygon.
\end{lemma}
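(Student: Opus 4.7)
The plan is to first identify the convex hull of $T_1 \cup T_2$ as a quadrilateral, and then to determine the new extreme lattice points that appear when the origin is removed.

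Write $T_1 := N \cdot IT_K(M,N)$ and $T_2 := (M+N) \cdot IT_{K-1}(N-1,1)$. The defining identity $(M+N)^2 = KMN + 1$ gives $KN^2 - (N+1)(M+N) = 1$, so the apex $B'$ of $T_1$ sits exactly one unit above the apex $Q'$ of $T_2$. Similarly $MN - (M+N)(N-1) = N-1$, so the bottom-right vertex $P' = ((M+N)(N-1),0)$ of $T_2$ lies in the interior of the segment $OA'$. A short cross-product check confirms that the convex hull of $T_1 \cup T_2$ is the quadrilateral with vertices $O, A', Q', B'$ in counter-clockwise order, and hence so is the convex hull of the lattice points of $T_1 \cup T_2$.

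Upon removing $O$, the hull shrinks near the origin. The next lattice point after $O$ on the base $OA'$ is $(1,0)$, and the next one on the LHS edge $OB'$ of $T_1$ is $C' = (M+N, KN)$, since $\gcd(N(M+N), KN^2) = N$. The next lattice point after $O$ on the LHS edge $OQ'$ of $T_2$ is $D' = (N, N+1)$, since $\gcd(N, N+1) = 1$. Finally, $(1,1)$ lies in both $T_1$ and $T_2$ because both LHS slopes, $KN/(M+N)$ and $(N+1)/N$, strictly exceed $1$. I claim that the new convex hull has the seven vertices $A', Q', B', C', D', (1,1), (1,0)$ in counter-clockwise order. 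Checking convexity is a cross-product computation; the critical left turn occurs at $C' \to D'$, where the incoming edge direction is proportional to $-(M+N,KN)$ and the outgoing one is $(-M,-(M+N))$, so the cross product equals $(M+N)^2 - KMN = 1$ by the defining equation. The remaining cross products reduce to simple positive expressions such as $N-1$.

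To finish, I will show that no lattice point of $(T_1 \cup T_2) \setminus \{O\}$ lies in the cut-off region near the origin, i.e., strictly on the $O$-side of any of the three new supporting lines $\ell_1 : x=1$, $\ell_2 : Nx - (N-1)y = 1$, and $\ell_3 : (M+N)x - My = 1$. A lattice point on the $O$-side of $\ell_1$ satisfies $x=0$, hence must be $O$ itself since both LHS edges pass through the origin with positive slope. A lattice point on the $O$-side of $\ell_2$ satisfies $Nx-(N-1)y \le 0$, i.e.\ $y/x \ge N/(N-1)$; but the LHS slopes $KN/(M+N)$ and $(N+1)/N$ of $T_1$ and $T_2$ are both strictly less than $N/(N-1)$ (direct calculations using $K=N+2$ and $M=N^2-1$). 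A lattice point on the $O$-side of $\ell_3$ requires $y/x \ge (M+N)/M$; the inequalities $KN/(M+N) < (M+N)/M$ (a restatement of $(M+N)^2 > KMN$) and $(N+1)/N < (M+N)/M$ (equivalent to $M<N^2$) both hold. The support statement for $\Upsilon_K'$ then follows because $(\xi^{int}_{2,K})^N$ and $(\xi^{int}_{1,K-1})^{M+N}$ are supported in $T_1$ and $T_2$ respectively, and their constant terms, both equal to $1$ by normalization, cancel in their difference. The main technical obstacle is the slope-comparison argument, whose three pieces all ultimately rest on the defining identity $(M+N)^2 = KMN + 1$.
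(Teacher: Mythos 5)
Your proof is correct, and its overall strategy---take the convex hull of the union of the two triangles, delete the origin, and identify the new boundary chain $(1,0)$, $(1,1)$, $D'$, $C'$---is the same as the paper's. Where you differ is in how the two key facts are verified. For the ``outer'' vertices $A'$, $B'$, $Q'$, $C'$, the paper argues via the \emph{support} of $\Upsilon_K'$ (these points carry coefficients $\pm 1$, resp.\ $\pm N$, by the discussion in the proof of Proposition~\ref{se-prop-irred}), whereas you argue purely from the geometry of the two triangles; your version is more directly aligned with the statement of the lemma, which is about all lattice points of the triangles rather than the support of a particular polynomial. For the crucial emptiness claim near the origin, the paper observes that $\{(1,1),D'\}$ and $\{D',C'\}$ are bases of $\ZZ^2$, so the cut-off region is covered by unimodular triangles containing no extra lattice points; you instead exhibit the three supporting lines $x=1$, $Nx-(N-1)y=1$, $(M+N)x-My=1$ and rule out lattice points on their $O$-side by slope comparisons. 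The two verifications are equivalent in content---your cross product $(M+N)^2-KMN=1$ at the turn onto $\overline{C'D'}$ is precisely the paper's unimodularity of $\{D',C'\}$, and all of your slope inequalities likewise reduce to $(M+N)^2=KMN+1$ or $M<N^2$---so this is a legitimate alternative route to the same lemma; the paper's version has the side benefit of establishing nonvanishing coefficients at $A',B',Q',C'$, which it reuses in Proposition~\ref{se-prop-np}, while yours keeps the convex-geometry statement cleanly separated from any coefficient computations.
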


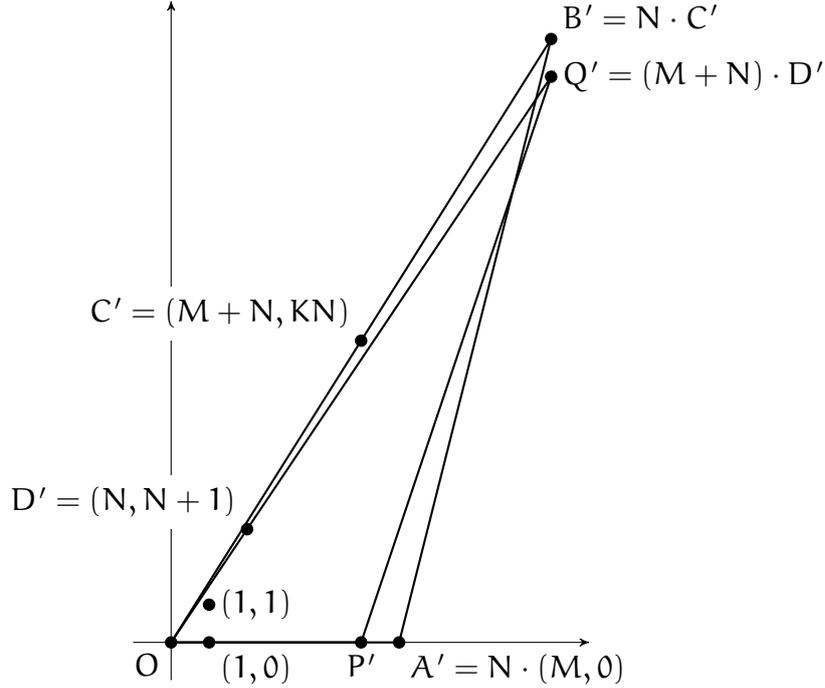
\begin{figure}[htb]
  \centering
  \begin{tikzpicture}[
  scale=0.5,
    axis/.style={ ->, >=stealth'},
    important line/.style={very thick},
    ]

    \draw[axis] (-1,0) -- (11,0) node(xline)[right] {};    
    \draw[axis] (0,-1) -- (0,17) node(yline)[above] {};

    \pgfmathsetmacro\K{4}
    \pgfmathsetmacro\N{int(\K - 2)}
    \pgfmathsetmacro\M{int(\N*\N -1)}
    
    \coordinate (O) at (0,0);
    \node at (O) [below left] {$O$};
    
    \coordinate (P) at ({(\M+\N)*(\N-1)},0);
    \node at (P) [below] {$P'$};
    \coordinate (Q) at ({(\M+\N)*\N},{(\M+\N)*(\N+1)});
    \node at (Q) [right] {$Q'=(M+N)\cdot D'$};
    
    \coordinate (A) at ({\N*\M},0);
    \node at (A) [below right] {$A'=N\cdot (M,0)$};
    \coordinate (B) at ({\N*(\M+\N)},{\K*\N^2});
    \node at (B) [above right] {$B'=N\cdot C'$};

    \coordinate (C) at ({\M+\N},{\K*\N});
    \node[rectangle, fill=white] at (C) [above left] {$C'=(M+N,KN)$};
    \coordinate (D) at (\N,{\N+1});
    \node[rectangle, fill=white] at (D) [above left] {$D'=(N,N+1)$};
    
    
    \node at (1,0) [below right] {$(1,0)$};
    \node at (1,1) [right] {$(1,1)$};

    \draw[thick] (O) -- (A) -- (B) -- cycle;
    \draw[thick] (O) -- (P) -- (Q) -- cycle;

     \foreach \Point in {
       (0,0),(1,1),(1,0),(A),(B),(P),(Q),(C),(D)
     }
     \draw[fill=black] \Point circle (0.15);

\end{tikzpicture}

  \caption{Newton polygons of $\left(\xi_{1,K-1}^{int}\right)^{M+N}$ and $\left(\xi_{2,K}^{int}\right)^{N}$ on top of each other.}
  \label{lemma-triangles}
\end{figure}

\begin{proof}
  It follows from the discussion in the proof of Proposition~\ref{se-prop-irred} that the vertices $A',B'$ and $Q'$ all are all in the support of $\Upsilon_{K}'$ with coefficients $\pm 1$.

  Since the LHS slope of $IT_{K}(M,N)$ is greater than that one of $IT_{K-1}(N-1,1)$, the point $C'$ is also a vertex of the support of $\Upsilon_{K'}$ (one can check that it has coefficient $\pm N$ because the coefficient of $\xi_{2,K}^{int}$ at its top vertex is $\pm1$). It only remains to show that the missing vertices are $D',(1,1)$ and $(1,0)$. 
  
  To see this we show that the (non-convex) polygon with vertices $O,D',(1,1)$ and $(1,0)$ has no lattice points in its interior. Indeed, a direct computation shows that the vectors $(1,1)$ and $D'$ form a basis of the lattice, just as $D'$ and $C'$ do.
\end{proof}


With the previous result we can now determine the Newton polygon of $\Upsilon_{K}$.

\begin{proposition}\label{se-prop-np}
  The Newton polygon of $\Upsilon_{K}$ has vertices
  \begin{align*}
    O&=(0,0),\\
    A&=(MN-1,0),\\
    B&=(N(M+N)-1,KN^{2}-1),\\
    C&=(M+N-1, KN-1)\text{ and}\\
    D&=(N-1,N).
  \end{align*}
\end{proposition}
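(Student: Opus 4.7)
The plan is to recover the Newton polygon of $\Upsilon_K$ from that of $\Upsilon_K'$ by exploiting the factorization $\Upsilon_K' = x(1-y)\Upsilon_K$. Writing $\Upsilon_K' = x\Upsilon_K - xy\Upsilon_K$, one sees that in each column $x = j$ with $j \geq 1$ the lowest lattice point in the support of $\Upsilon_K'$ comes only from the $x\Upsilon_K$ summand, while the highest comes only from $-xy\Upsilon_K$. Consequently, the lower envelope of the Newton polygon of $\Upsilon_K$ is obtained from that of $\Upsilon_K'$ by the shift $(-1, 0)$, and the upper envelope by the shift $(-1, -1)$. This is the column-by-column translation rule hinted at in the paragraph preceding Lemma \ref{lemma-u-prime}.

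First I would determine the Newton polygon of $\Upsilon_K'$ exactly. Lemma \ref{lemma-u-prime} constrains its support to the convex hull with vertices $(1, 0), A', Q', B', C', D', (1, 1)$, and the proof of Proposition \ref{se-prop-irred} already records that $A', B', Q'$ and $C'$ lie in the support with nonzero coefficients. What remains is to check that $(1, 0), D'$ and $(1, 1)$ are also vertices of the Newton polygon of $\Upsilon_K'$. I would do this by direct coefficient computation from $\Upsilon_K' = (\xi_{2,K}^{int})^N - (\xi_{1,K-1}^{int})^{M+N}$: at each of these three points only a short list of partitions contributes to the relevant coefficient, and each contribution can be written as a small binomial combination of the vertex coefficients of the $\xi^{int}_{n,K}$, which are all $\pm 1$ with signs governed by the conventions for $\varepsilon^{int}_n$ from Subsection \ref{subsection-IT-RT}. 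Non-cancellation then follows by inspecting these signs.

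Once all seven vertices of the Newton polygon of $\Upsilon_K'$ are confirmed, the shift rule of the first paragraph yields the five-vertex polygon claimed for $\Upsilon_K$. The lower envelope $(1, 0) \to A' \to Q'$ translates by $(-1, 0)$ to $O \to A \to B$, where one uses the identity $(N+1)(M+N) = KN^2 - 1$ to match $Q' - (1, 0)$ with $B$; the upper envelope $B' \to C' \to D' \to (1, 1)$ translates by $(-1, -1)$ to $B \to C \to D \to O$. The main obstacle I anticipate is the non-cancellation check at $D'$, since this point is simultaneously an interior lattice point of the scaled triangle $N \cdot IT_K(M, N)$ and the top vertex of $(M+N) \cdot IT_{K-1}(N-1, 1)$: both summands of $\Upsilon_K'$ contribute nontrivially to its coefficient and one must verify that the two contributions do not sum to zero. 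The analogous checks at $(1, 0)$ and $(1, 1)$ are less delicate, since these points sit very close to the origin and only the coefficients of $\xi^{int}_{n,K}$ adjacent to $(0, 0)$ enter the computation.
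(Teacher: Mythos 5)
Your overall strategy coincides with the paper's: both recover the Newton polygon of $\Upsilon_K$ from that of $\Upsilon_K'$ via the column-wise shift coming from $\Upsilon_K' = x(1-y)\Upsilon_K$, with Lemma~\ref{lemma-u-prime} supplying the outer bound and the vertices $A'$, $B'$, $Q'$, $C'$ already certified. The gap is in your plan for the non-vanishing checks at $(1,0)$, $(1,1)$ and $D'$. The coefficients of $\Upsilon_K'$ at these points are \emph{not} small binomial combinations of vertex coefficients of the $\xi^{int}$'s: the quantities that enter are the coefficients of $\xi^{int}_{2,K}$ and $\xi^{int}_{1,K-1}$ at the lattice points $(1,0)$, $(1,1)$ and $D'=(N,N+1)$, and (except for $D'$ as the top vertex of $IT_{K-1}(N-1,1)$) these are points adjacent to the origin, not vertices, and their coefficients are not $\pm1$. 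For instance, for $K=4$ one has $\xi^{int}_{1,3}=1+x-3xy+x^2y^3$, whose coefficient at $(1,1)$ is $-3$, and the coefficient of $x$ in $\xi^{int}_{2,4}$ equals $3$. So sign inspection cannot decide whether the two contributions $N\cdot(\text{coeff})$ and $(M+N)\cdot(\text{coeff})$ cancel, and controlling these non-vertex coefficients uniformly in $K$ would require a separate recurrence analysis comparable to Lemma~\ref{lemma.bound.an}.

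The paper sidesteps exactly these points. It only proves \emph{containment} of the Newton polygon of $\Upsilon_K$ in the pentagon $OABCD$ (remove the bottom layer, translate by $(-1,-1)$), and then certifies the five vertices of $\Upsilon_K$ by other means: $A$, $B$, $C$ come from the $\pm1$ (resp.\ $\pm N$) coefficients at the genuine vertices $A'$, $B'$, $C'$; the constant term at $O$ (which is equivalent to your checks at $(1,0)$ and $(1,1)$) is handled indirectly --- if it vanished, $\Upsilon_K$ would be divisible by $1-y$ by Lemma~\ref{lemma-width}, contradicting the isolated $\pm1$ coefficient at $B$; and at $D$ the two contributions are $\pm(M+N)$ and $rN$ for some unknown integer $r$, which cannot sum to zero because $\gcd(M,N)=1$. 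These two arguments are precisely what is missing from your proposal; without them, or some substitute, the non-cancellation steps do not close.
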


\begin{proof}
Since $\Upsilon'_{K} = x(1-y)\Upsilon_{K}$, the polynomial $\Upsilon_{K}$ is supported in the polygon obtained by removing the bottom layer of lattice points in the polygon from Lemma~\ref{lemma-u-prime} and then translating the result by $(-1,-1)$. We show that the resulting polygon has vertices $O,A,B,C$ and $D$ and that the coefficients of $\Upsilon_{K}$ at these points are nonzero.
  
First note that every lattice point along the bottom edge of the polygon supporting $\Upsilon_{K}'$ is removed by this process. All the other points that get removed are $Q'$ and all lattice points along the edge $\overline{A'B'}$ except for $B'$. Indeed, $\overline{A'B'}$ is the RHS edge of $N\cdot IT_{K}(M,N)$ and has integral slope, so the only possibility is for the points along it or below it to get removed by this process. However, since $B'=Q'+(0,1)$, it's clear that there are no lattice point points in the support of $\Upsilon_{K}'$ lying below those along the edge.
  
It follows that the Newton polygon of $xy\Upsilon_{K}$ is contained in the convex hull of $(1,1)$, $A'+(0,1)$, $B'$, $C'$ and $D'$. Translating by $(-1,-1)$ yields the polygon with vertices $O,A,B,C$ and $D$.

Now we show that $\Upsilon_{K}$ has nonzero coefficients at these points.

From the proof of Lemma~\ref{lemma-u-prime} we know that the coefficients of $\Upsilon_{K}$ at $A$ and $B$ are $\pm 1$. Similarly, the coefficient of $\Upsilon_{K}'$ at $C'$ is $\pm N$ because the coefficient of $\xi_{2,K}^{int}$ at its top vertex is $\pm1$, so the same is true for the coefficient of $\Upsilon_{K}$ at $C$.

The constant term of $\Upsilon_{K}$ is equal to $\pm1$. It's nonzero because otherwise the polynomial would be divisible by $1-y$ by Lemma~\ref{lemma-width}. This is impossible because $B$ is in its support with coefficient $\pm1$ and no other point along the same column is. Furthermore, the coefficient is equal to $\pm1$ because the previous argument as well as the proof of Lemma~\ref{lemma-width} are independent of the characteristic of the field.

To finish the proof we show that the coefficient of $\Upsilon_{K}$ at $D$ is nonzero or, equivalently, that the same is true for the coefficient of $\Upsilon_{K}'$ at $D'$. The contributions to this coefficient come from both $\left(\xi_{2,K}^{int}\right)^{N}$ and $\left(\xi_{1,K-1}^{int}\right)^{M+N}$. On the one hand, $D'$ is the top vertex of $IT_{K-1}(N-1,1)$, so $\left(\xi_{1,K-1}^{int}\right)^{M+N}$ contributes by $\pm (M+N)$. On the other hand, we will show that $\left(\xi_{2,K}^{int}\right)^{N}$ has coefficient $rN$ at this vertex for some $r\in\ZZ$. This would imply the claim because the coefficient of $\Upsilon_{K}$ at $D$ would be, up to a sign,
\[
  (M+N) \pm rN =  M + (1\pm r)N\neq 0.
\]
This can't be zero because $M$ and $N$ are relatively prime.

To compute the contribution of $\left(\xi_{2,K}^{int}\right)^{N}$ to this coefficient note that the segment $\overline{OD'}$ has length $1$, and the only other lattice point in $IT_{K}(M,N)$ having slope greater than or equal to that one of $D'$ is its top vertex. However, the top vertex is further away from the origin than $D'$, so the coefficient of $\left(\xi_{2,K}^{int}\right)^{N}$ at $D'$ is the same as that one of $(1+ r\chi^{D'})^{N}$, where $r$ is the coefficient of $\xi_{2,K}^{int}$ at $D'$. Since the polynomials $\xi_{IT}$ are defined over $\ZZ$, this concludes the proof.
\end{proof}

Computing the deficiency of $\Upsilon_{K}$ is now a straightforward but lengthy computation, which is left to the reader. We present the following lemma as a guide.

\begin{lemma}
  The Newton polygon of $\Upsilon_{K}$ has:
  \begin{enumerate}[(i)]
  \item Lattice perimeter $N(M+1)+1$, and
  \item $\operatorname{Area}=\frac{1}{2}\left(K M N^3 - K N^2 - M N + M + N\right)$.
  \end{enumerate}
  By Pick's theorem, its number of lattice points is:
  \[
    \frac{1}{2}\left(K M N^3 - K N^2 +2N + M + 3\right).
  \]
  As a consequence, the deficiency of $\Upsilon_{K}$ is $\frac{(K-2)(K-3)}{2}$.
\end{lemma}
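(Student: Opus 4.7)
The plan is to reduce the lemma to three standard computations on the pentagon $OABCD$ described in Proposition~\ref{se-prop-np}: lattice lengths of its edges, its area via the shoelace formula, and Pick's theorem. The only non-trivial number-theoretic input is Equation~\ref{eqn-MN}, which (after the substitutions $M=N^{2}-1$ and $N=K-2$ are applied at the very end) will collapse the final expression to the stated deficiency.

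For (i), I would compute each edge direction vector and then take the $\gcd$ of its coordinates. The five direction vectors, up to sign, are $(MN-1,0)$, $(N^{2},KN^{2}-1)$, $((N-1)(M+N),(N-1)KN)$, $(M,M+N)$ (using the identity $(K-1)N-1 = M+N$ coming from $M=N^{2}-1$), and $(N-1,N)$. Almost every $\gcd$ needed follows from a single observation: $(M+N)^{2}=KMN+1$ forces $\gcd(M+N,KMN)=1$, hence also $\gcd(M+N,N)=\gcd(M+N,KN)=\gcd(M,N)=1$. Combined with the trivial $\gcd(N-1,N)=1$, the five edges contribute $MN-1,\,1,\,N-1,\,1,\,1$, summing to $N(M+1)+1$.

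For (ii), I would plug the five vertices into the shoelace formula. After a routine polynomial expansion in $K,M,N$, the $KMN^{2}$ terms cancel pairwise, the $KN^{3}$ terms cancel pairwise, and the constant and $N^{2}$ terms telescope, leaving $\tfrac{1}{2}(KMN^{3}-KN^{2}-MN+M+N)$. Pick's theorem, written as $\#\mathrm{lattice\ pts}=\mathrm{Area}+\tfrac12\mathrm{Perimeter}+1$, then combines (i) and (ii) into the stated count.

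For the deficiency, $m=N(M+N)-1$ gives $\binom{m+1}{2}+1 = \tfrac12\bigl(N^{2}(M+N)^{2}-N(M+N)+2\bigr)$. Substituting $(M+N)^{2}=KMN+1$ collapses this to $\tfrac12(KMN^{3}-MN+2)$, and subtracting the lattice point count from (iii) yields $\tfrac12(KN^{2}-MN-2N-M-1)$. Only now do I substitute $M=N^{2}-1$ and $N=K-2$, after which the expression simplifies to $(K-2)(K-3)/2$. The main obstacle is not conceptual but bookkeeping-related: one must track signs carefully in the shoelace expansion and, crucially, apply $(M+N)^{2}=KMN+1$ before substituting for $M$ and $N$ individually, otherwise the cancellations look much messier than they really are.
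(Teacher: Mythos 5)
Your proposal is correct and follows essentially the same route as the paper: the paper's own proof establishes (i) by exactly the same five lattice-length computations ($\overline{OA}$ of length $MN-1$, $\overline{BC}$ of length $N-1$, and three unit edges, summing to $N(M+1)+1$) and explicitly leaves (ii), the Pick count, and the deficiency to the reader as a routine computation, which you carry out correctly. I checked the details: the shoelace sum over $O,A,B,C,D$ does reduce to $KMN^{3}-KN^{2}-MN+M+N$, and the deficiency $\tfrac12\left(KN^{2}-MN-M-2N-1\right)$ becomes $\tfrac{(K-2)(K-3)}{2}$ after substituting $N=K-2$, $M=N^{2}-1$.
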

\begin{proof}
  We show only (i). Let $A,B,C$ and $D$ be as in Proposition~\ref{se-prop-np}. 
  \begin{enumerate}[(i)]
  \item From the proof of Lemma~\ref{lemma-u-prime} we know that the segments $\overline{OD}$ and $\overline{CD}$ have lattice length $1$ and $\overline{OA}$ has lattice length $MN-1$. Similarly,  we showed above that the edge $\overline{AB}$ has lattice length $1$. Finally,
    \[
      B = C + (N-1)(M+N,KN),
    \]
    where $M+N$ and $KN$ are relatively prime. Therefore, $\overline{BC}$ has length $N-1$. Adding up these lengths yields a perimeter of $N(M+1)+1$.
  \end{enumerate}
\end{proof}

To conclude this section we piece together all the previous result to prove Theorem~\ref{thm-special}.

\begin{proof}[Proof of Theorem~\ref{thm-special}.]
  By equation~\ref{eqn-special} we know that $\Upsilon_{K}$ has $m=N(M+N)-1$. The previous lemma shows that this curve has the desired deficiency. It only remains to show that the polynomial is supported in the triangle described in the statement. The vertices $A$ and $B$ from Proposition~\ref{se-prop-np} are precisely the ones described in the statement of the theorem, and a direct computation shows that the edge $\overline{BC}$ has slope $\frac{KN}{M+N}$.

  The self-intersection of the negative curve $C$ defined by $\Upsilon_{K}$ is:
  \[
    C\cdot C = \left( MN-1 + \frac{K-1}{KN}\right)(KN^{2} -1) - (N(M+N)-1)^{2}= -\frac{K-1}{KN}.
  \]
\end{proof}


\section{The Mori Dream Space property}

In this section we consider the Mori Dream Space property of the blowups $X$. The main observation is that this property is often determined by the location of the diamonds of the negative curves on the $st$-plane.

By a result of Cutkosky, $X$ is a MDS if and only if it has a negative curve $C$ and an irreducible curve $D$ disjoint from it. Consider a blowup $X$ that corresponds to a point on the $st$-plane where two diamonds meet. The two negative curves are disjoint in $X$, hence $X$ is a MDS. If we now move to the interior of one diamond, then under some conditions the two curves stay disjoint and the variety $X$ stays a MDS. In fact, we will see that this idea explains almost all known cases of the MDS property for $X$. We start with a proposition that is the basis for this argument.

We say that a triangle $\Delta$ supports a negative curve $C$ if the Newton polygon of the equation of $C$ lies in $\Delta$, and $\Delta$ is the smallest such triangle parallel to $\Delta$.

\begin{proposition} \label{prop-CD}
  Let $\Delta_1 \subseteq \Delta_2$ be two triangles that support the same negative curve $C$. If $X_2=\Bl_e X_{\Delta_2}$ is a MDS then so is $X_1=\Bl_e X_{\Delta_1}$.
\end{proposition}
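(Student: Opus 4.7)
The plan is to build $D_1 \subseteq X_1$ from the $D_2 \subseteq X_2$ supplied by Cutkosky's criterion, by exploiting the fact that $X_1$ and $X_2$ share a common open subset on which the two negative curves coincide.

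First I would apply Cutkosky's theorem to obtain an irreducible curve $D_2 \subseteq X_2$ with $D_2 \cap C_2 = \emptyset$, where $C_2$ is the strict transform of $V(\xi) \subset X_{\Delta_2}$. Let $\eta \in k[x^{\pm 1}, y^{\pm 1}]$ be a Laurent polynomial whose zero locus in the torus $T$ equals $D_2 \cap T$; since $D_2$ is irreducible, $\eta$ is irreducible up to a monomial factor. I would then define $D_1 \subseteq X_1$ to be the strict transform of the closure of $V(\eta) \cap T$ in $X_{\Delta_1}$. By construction $D_1$ is irreducible, and what remains is to verify $D_1 \cap C_1 = \emptyset$.

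To check disjointness I would split $X_1$ into three pieces. First, on the common open subvariety $U = \Bl_e T \subseteq X_1 \cap X_2$ (canonically identified because the blowup is centered at a torus point), we have $D_1 \cap U = D_2 \cap U$ and $C_1 \cap U = C_2 \cap U$, so disjointness is inherited directly from the hypothesis on $X_2$. Second, the intersections with the exceptional divisor $E$ are determined entirely by the local behaviour of $\xi$ and $\eta$ at the point $e \in T$, which is the same data in both blowups; hence $D_1 \cap E = D_2 \cap E$ is disjoint from $C_1 \cap E = C_2 \cap E$. The only remaining places where $D_1$ and $C_1$ could meet are on the three toric boundary divisors $D^{(1)}_i \subseteq X_{\Delta_1}$.

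For this last step I would use that both $\Delta_1$ and $\Delta_2$ are minimal triangles supporting $\xi$, so each edge of each triangle meets the Newton polygon $P(\xi)$, and the intersection of $C_j$ with the boundary divisor of $X_{\Delta_j}$ corresponding to an edge $e$ is governed by the initial form $\xi|_e$ (whose zeros encode these boundary points). The same applies to $D_j$ and $\eta$. The inclusion $\Delta_1 \subseteq \Delta_2$ lets me compare initial forms of $\xi$ and $\eta$ at edges of $\Delta_1$ with those at edges of $\Delta_2$: the extremal lattice points of $P(\eta)$ and $P(\xi)$ picked out by $\Delta_1$'s edges are controlled by those picked out by $\Delta_2$'s edges, and disjointness of $D_2$ and $C_2$ along $\partial X_{\Delta_2}$ forces the relevant initial forms to have no common zeros, hence the same holds along $\partial X_{\Delta_1}$.

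The main obstacle I anticipate is precisely this final edge-by-edge comparison: because $\Delta_1$ and $\Delta_2$ have distinct slopes, the toric boundary divisors of $X_{\Delta_1}$ and $X_{\Delta_2}$ are geometrically different, so transferring the disjointness information from one boundary to the other requires careful tracking of which vertices of $P(\xi)$ lie on which edges of the two triangles, and of how the corresponding initial forms of $\eta$ behave. I expect that the minimality condition on $\Delta_1$ and $\Delta_2$, combined with the fact that $\Delta_1 \subseteq \Delta_2$ forces the edges of $\Delta_1$ to "cut" $P(\xi)$ at least as tightly as those of $\Delta_2$, is exactly what makes this transfer work.
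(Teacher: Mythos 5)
Your first two steps (transferring $D_2$ to $X_1$ through the shared open set $\Bl_e T$, and noting that disjointness there and along $E$ is automatic) are fine, but the boundary step you flag as ``the main obstacle'' is a genuine gap, not a technicality. Knowing that $C_2$ and $D_2$ are disjoint along $\partial X_{\Delta_2}$ gives you information about the initial forms $\operatorname{in}_{v}(\xi)$ and $\operatorname{in}_{v}(\eta)$ only for $v$ an edge normal of $\Delta_2$; the edges of $\Delta_1$ have different normal directions, so they select \emph{different} faces of the Newton polygons of $\xi$ and $\eta$, about which the hypothesis says nothing. The heuristic that $\Delta_1$ ``cuts $P(\xi)$ at least as tightly'' does not control the zeros of $\operatorname{in}_{v_1}(\eta)$ on the new boundary divisors, and your sketch also ignores possible meetings at the (singular) torus-fixed points of $X_{\Delta_1}$, where a face-by-face analysis of open orbits does not apply. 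I do not see how to close this locally, edge by edge.

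The paper replaces this local analysis with a global intersection-theoretic one, and you should too. Pass to $Y=\Bl_e Z$, where $Z$ is the toric surface whose fan is the common refinement of the normal fans of $\Delta_1$ and $\Delta_2$, with maps $\pi_i:Y\to X_i$. The hypothesis $\Delta_1\subseteq\Delta_2$ guarantees that $\pi_1^*(C_1)$ is still irreducible (the minimal polygon with sides parallel to both triangles containing $P(\xi)$ is $\Delta_1$ itself), so $C_2=(\pi_2)_*\pi_1^*(C_1)$. Setting $D_1=(\pi_1)_*\pi_2^*(D_2)$, the projection formula gives $C_1\cdot D_1=C_2\cdot D_2=0$. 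Since $D_1$ is an effective cycle (the strict transform of $D_2$ plus nonnegative multiples of boundary divisors) having no component equal to $C_1$, each of its components meets $C_1$ with nonnegative multiplicity; the total being zero forces every component, in particular the strict transform of $D_2$, to be disjoint from $C_1$. This single computation is exactly what substitutes for the edge-by-edge transfer you could not carry out.
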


\begin{proof}
  Consider a variety $Y$ with proper birational morphisms $\pi_i: Y\to X_i$, $i=1,2$. Here $Y=\Bl_e Z$, where $Z$ is the toric surface whose fan is the coarsest refinement of the normal fans of $\Delta_1,\Delta_2$. We consider curves in $Y$ and $X_i$ as either $\QQ$-Cartier divisors or as Chow classes. We then have the pullback $\pi_i^*$ of divisors and the proper pushforward $(\pi_i)_*$ of the Chow classes.

  Let $C_i\subset X_i$ be the negative curves. The assumption that $\Delta_1\subseteq \Delta_2$ means that $\pi_1^*(C_1)$ is an irreducible curve in $Y$. Indeed, the smallest polygon that supports the equation of $C_1$ and has sides parallel to the sides of $\Delta_1$ and $\Delta_2$ is the triangle $\Delta_1$. Then $C_2= (\pi_2)_* \pi_1^* (C_1)$.

  Let $D_2$ be an irreducible curve in $X_2$ orthogonal to $C_2$, and let $D_1= (\pi_1)_* \pi_2^* (D_2)$. This $D_1$ is an effective curve in $X_1$, possibly reducible. It is orthogonal to $C_1$:
  \[ C_1 \cdot D_1 = C_1 \cdot (\pi_1)_* \pi_2^* (D_2) = \pi_1^*(C_1) \cdot \pi_2^* (D_2) = (\pi_2)_* \pi_1^* (C_1) \cdot D_2 = C_2 \cdot D_2 = 0.\]
  The curve $D_1$ consists of the strict transform of the curve $D_2$ plus a linear combination of boundary divisors. There is no component $C_1$ in it, hence $D_1$ and $C_1$ are disjoint. This implies that $D_1$ is in fact irreducible, equal to the strict transform of $D_2$.
\end{proof}

The proposition shows that if the triangles supporting the negative curve $C$ satisfy the containment $\Delta_1 \subseteq \Delta_2$, then the equation defining the curve $D$ in $X_2$ also defines the curve $D$ in $X_1$. This implies that the triangles supporting the equation of $D$ satisfy the reverse containment $\Delta'_2 \subseteq \Delta'_1$.

Next we explain how to check if $\Delta_1 \subseteq \Delta_2$. Let $(s_i,t_i)$ be the slopes of $\Delta_i$. We start with the cases where either $s_1=s_2$ or $t_1=t_2$. These are determined as follows:
\begin{enumerate}
\item Assume that $\Delta_1$ has the lower left vertex integral. If $s_2>s_1, t_2=t_1,$ then $\Delta_1 \subseteq \Delta_2$. Indeed, we pivot the left edge of $\Delta_1$ around its lower left vertex, making the triangle larger.
\item Assume that $\Delta_1$ has the lower right vertex integral. If $s_2=s_1, t_2<t_1,$ then $\Delta_1 \subseteq \Delta_2$.
\item Assume that $\Delta_1$ has the top vertex integral. If $s_2=s_1, t_2>t_1,$ or $s_2<s_1, t_2=t_1,$ then $\Delta_1 \subseteq \Delta_2$.
\end{enumerate}
All other cases do not yield $\Delta_1 \subseteq \Delta_2$ when varying  $s_1$ or $t_1$.
We can combine these cases and vary both $s_1$ and $t_1$:
\begin{enumerate}
\item Assume that $\Delta_1$ has the top vertex integral. If $s_2\leq s_1, t_2 \geq t_1,$ then $\Delta_1 \subseteq \Delta_2$. We first change $s_1$ to $s_2$ by pivoting around the top vertex. This keeps the top vertex integral, so we can change $t_1$ to $t_2$. Similarly, if $\Delta_1$ has its lower left and right vertices integral and $s_2\geq s_1, t_2 \leq t_1,$ then $\Delta_1 \subseteq \Delta_2$. 
\item Consider the case $s_2 > s_1, t_2 > t_1$. Even if all vertices of $\Delta_1$ are integral, the condition $\Delta_1 \subseteq \Delta_2$ may not be satisfied. Indeed, if we first change $s_1$ to $s_2$, then the top vertex may become rational, and we are not allowed to change $t_1$ to $t_2$. 
\end{enumerate}

We now consider various diamonds of negative curves and show that the previous proposition explains most known examples where the MDS property holds, at least for the case $m>1$. 
\begin{enumerate}
\item Consider the triangle $IT(M,N)$. It supports a negative curve with Newton polygon equal to the triangle itself. Let $(s_0,t_0)$ be its coordinates. This is the point in the centre of the diamond. Consider the line segment from $(s_0,t_0)$ to $(s_0,t_1)$, where the second point lies at the top vertex of the $IT(M,N)$ diamond. The point $(s_0,t_1)$ is the intersection point of the $IT(M,N)$ diamond and the region of the negative curve defined by $1-y$. As we move up from $(s_0,t_0)$ towards $(s_0,t_1)$, we increase the triangle supporting the negative curve $C$. Since the variety corresponding to $(s_0,t_1)$ is a MDS, so is every other variety $X$ corresponding to a point on the line segment.

  The left and right vertices of the $IT(M_n,N_n)$ diamond meet diamonds of $RT(M_{n-1},N_{n-1})$ and $RT(M_n,N_n)$. By a similar argument, the whole horizontal line in the $IT(M,N)$ diamond through $(s_0,t_0)$ has the MDS property. The lower vertex of the $IT(M,N)$ diamond meets another diamond only for $(M,N)=(M_n,N_n)$, $n=1,2$ and for $(M,N)=(N_1,M_1)$. For instance, the diamond of $IT_K(M_2,N_2)=IT_K((K-2)^2-1, K-2)$ meets the diamond of the special negative curve in the infinite family. In these cases the lower half of the vertical axis in the $IT(M,N)$ diamond has the MDS property.

\item For triangles $RT(M,N)$, the same argument as in the integral case shows that points on the horizontal axis and the upper half of the vertical axis have the $MDS$ property. When we move down from $(s_0,t_0)$, then the triangle supporting $C$ does not get bigger, hence we cannot apply the proposition to this case. Except for the small values $(M_1, N_1)=(K-2, 1)$ and $(N_1, M_1)$, the lower vertex of the $RT(M,N)$ diamond is irrational, hence one does not expect it to meet another diamond.

\item Consider the diamond of the negative curve that is supported in $RT_4(2,1)$ and $IT_3(1,1)$ (see Figure~\ref{fig-m2}). This diamond meets other diamonds not only at its four vertices but also at other points on its boundary. Let $(s_0,t_0) = (1.5, 3)$ be the centre of the diamond and $(s_1, t_1)$ a point on the boundary that also lies in another diamond. If the point $(s_1,t_1)$ lies either on the north-west or south-east boundary, then all points in the rectangle $[s_{0}, s_{1}]\times[t_{0},t_{1}]$ have the MDS property. Indeed, for all these points the triangle supporting the negative curve $C$ lies in the triangle at $(s_1,t_1)$. If the $(s_1, t_1)$ lies on the north-east boundary, then we can only conclude the MDS property for the horizontal line $s_{0} \leq s \leq s_{1},t=t_{1}$. Along this line we have containment of triangles. Similarly, when $(s_1,t_1)$ lies on the south-west boundary, we get the MDS property on the vertical line $s=s_1, t_{1} \leq t \leq t_{0}$. 

  The triangle $RT_4(2,1)$ contains the same negative curve, but has coordinates $(1.5, 4)$. The general proof for triangles $RT(M,N)$ in this case shows that points above the line $t=4$ are non-MDS, unless they lie on the vertical axis $s=1.5$. 

  There is an obvious symmetry in the diamond. This stems from the fact that only the north-east half of the diamond lies in the fundamental domain. There is an isomorphism of triangles that maps the diamond to itself. It is given by first transforming the triangle by the shear transformation $(x,y)\mapsto (x-y,y)$ and then reflecting in the $y$ axis. 

  A similar argument applies to the negative curves supported in $RT(K-2, 1)$ and $IT(K-3,1)$ for $K>4$. 

  \begin{figure}
    \centering
    \includegraphics[width=\textwidth]{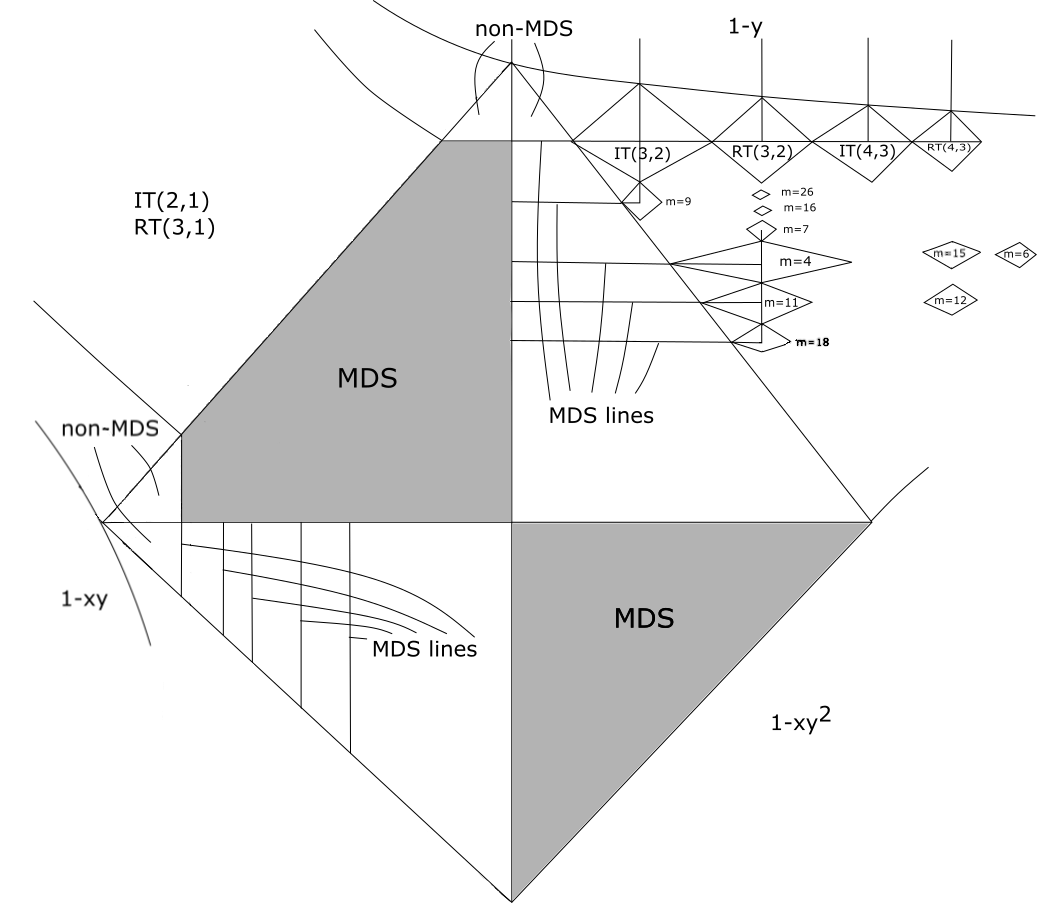}
    \caption{The big diamond of the negative curve in $IT_3(1,1)$ and $RT_4(2,1)$ with $m=2$. Figure is not to scale.}
    \label{fig-m2}
  \end{figure}

\item Consider the new family of non-special curves constructed in Section~\ref{sec-new-nonspecial}. For even $K$, the diamonds of these curves meet the diamonds of $RT_K(M_1,N_1)$. Hence, the left half of their horizontal axis has the MDS property. In particular, the centres of the diamonds, corresponding to the minimal triangles that support the negative curve, have the MDS property.

\item The diamonds of the infinite family of special curves meets the diamond of $IT_K(M_2,N_2)$ at its highest point and the diamond of $RT_K(M_1,N_1)$ at its left vertex. This means that the top half of the vertical axis and the left half of the horizontal axis have the MDS property. In particular, the centre points $(s_0,t_0)$ have the MDS property.

  The $m=18$ special curve from \cite{KuranoMatsuoka} is similar to the other special curves. Its diamond meets the $RT(2,1)$ diamond at its left vertex and the $m=11$ diamond at the top vertex.  In particular, the blowups of the two examples in \cite{KuranoMatsuoka}, $\PP(5, 33, 49)$ and $\PP(8,15,43)$ are both MDS.

\item In the column $s=5/3$ there is a continuous string of diamonds corresponding to curves with $m=7,4,11,18$. We get the MDS property in the line segment from the centre of the $m=7$ diamond to the centre of the $m=18$ diamond. This gives a range of $b,c$ such that $\Bl_e \PP(5,b,c)$ is a MDS.

\item Consider the region where the negative curve is defined by the vanishing of $1-y$. Here if we have a point $(s,t)$ with the MDS property, then any point above it, $(s,t')$ with $t'>t$, also has this property. We will see in the following lemma that all points that lie on the horizontal lines having $t=K$ an integer are MDS. Other than these examples, we also get vertical lines above the centres of $IT(M,N)$ and $RT(M,N)$.
\end{enumerate}

\begin{lemma} 
  Let $\Delta$ support the negative curve $C$ defined by the polynomial $1-y$. If the right slope $t$ of $\Delta$ is an integer, then $X=\Bl_e X_\Delta$ is a MDS.
\end{lemma}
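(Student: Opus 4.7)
The plan is to apply Cutkosky's theorem by exhibiting an irreducible curve $D \subset X$ disjoint from $C$. I construct $D$ as the zero locus of a polynomial $g(x,y)$, using the integrality of $K$ in an essential way.

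Write $s = a/p$ in lowest terms. For $D$ to be disjoint from $C$, the polynomial $g$ must satisfy $g(x,1) = c(1-x)^{m_D}$ for some $c \neq 0$ (so the only zero of $g$ on $\{y=1\}$ in the torus is $e$, with the right multiplicity to be resolved by the blowup) and must vanish to order $m_D$ at $e$; taking $[D] = \lambda H - m_D E$ with $\lambda H^2 = m_D$ then makes $C \cdot D = 0$ in $X$ automatically. I propose to take $m_D = p$, for which $\lambda\Delta$ translated so its lower-left vertex sits at the origin becomes the triangle with vertices $(0,0)$, $((Kp-a)/K, 0)$, and $(p, a)$. The integrality of $K$ enters here by guaranteeing that the top vertex $(p,a)$ is a lattice point; the fractional lower-right vertex is not an obstacle.

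Existence of a nonzero $g$ with support in this translated $\lambda\Delta$ satisfying the linear constraints follows from a count of lattice points versus constraints. The vanishing conditions impose $\binom{p+1}{2}$ equations at $e$; noting that the shape condition $g(x,1) = c(1-x)^{m_D}$ automatically subsumes the vanishing conditions in the $b = 0$ direction and adds $p$ further equations on column sums (up to the free scalar $c$), the total number of effective linear constraints is no larger than the number of available lattice-point coefficients, so a nontrivial $g$ exists.

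The principal remaining step, and the expected main obstacle, is showing that $g$ can be chosen irreducible. The left edge of the Newton polygon of $g$, running from $(0,0)$ to $(p,a)$, has lattice length $1$ because $\gcd(p,a) = 1$. Any proper factorization $g = g_1 g_2$ would force, via Minkowski decomposition of Newton polygons combined with the shape condition $g_i(x,1) \propto (1-x)^{m_i}$ (with $m_1 + m_2 = p$ and $m_1, m_2 \geq 1$), one of the factors to be of the form $(1-x)^{m_1}$ up to a monomial; monomial factors are ruled out because the constant term of $g$ (equal to $c$ after normalization) is nonzero, and $(1-x) \mid g$ fails as long as $g(1, y) \not\equiv 0$, which holds for the generic solution of the linear system. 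Completing this irreducibility check rigorously for all rational $s$ in the $1-y$ region (possibly via a continued-fraction case analysis of $s = a/p$) will be where most of the work of the proof lies.
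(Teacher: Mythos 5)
Your setup coincides with the paper's: you choose the same triangle $\Delta'$ (left edge the primitive lattice segment from $(0,0)$ to $(p,a)$, horizontal width $p$, so that $[D]$ is orthogonal to $[C]$), and you correctly locate the difficulty in producing a polynomial $g$ supported there, vanishing to order $p$ at $e$, whose curve is disjoint from $C$. But both substantive steps are missing, and the clearest symptom is that your argument never genuinely uses the hypothesis that $t=K$ is an integer. The top vertex $(p,a)$ is a lattice point by construction for \emph{every} rational $s=a/p$, independently of $K$, so the one place you say integrality enters is vacuous; since the paper expects most triangles in the $1-y$ region to be non-MDS, an argument that does not use $t\in\ZZ$ in an essential way cannot close. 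In the paper's proof integrality enters through a shear $(x,y)\mapsto (x,\,y-K(x-x_0))$ applied to the part of $\Delta'$ to the right of the lower-right vertex: this is a lattice transformation only when $K\in\ZZ$, and it turns $\Delta'$ into a triangle with horizontal base $[0,p]$ and height $\geq p$ (using $C^2\leq 0$), from which one extracts a subset of exactly $\binom{p+1}{2}+1$ lattice points with column counts $1,1,2,\ldots,p$. That combinatorial structure (via \cite[Section~2]{GGK3}) yields existence, uniqueness, \emph{and} the nonvanishing of the coefficients of $g$ at the left and top vertices all at once. Your replacement, ``constraints $\leq$ unknowns,'' is not obviously true: the area of $\Delta'$ is only $\geq p^2/2$, so a naive Pick-type count falls short of $\binom{p+1}{2}+1$ unless one controls boundary lattice points --- and that control is exactly what can fail when $t\notin\ZZ$.

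The second gap is the disjointness step, which you acknowledge is unfinished and which, as sketched, contains a non sequitur. From $g=g_1g_2$ and $g(x,1)=c(1-x)^p$ you may conclude $g_i(x,1)=c_i(1-x)^{m_i}$, but this does not force $g_i$ to equal $(1-x)^{m_i}$ up to a monomial (consider $g_1=(1-x)^{m_1}+(1-y)h$), and the Minkowski decomposition of the lattice-length-one left edge only tells you which factor carries that edge. Note also that irreducibility of $g$ is more than is needed: since $C$ is irreducible with $C^2\leq 0$ and $C\cdot D=0$, it suffices that $C$ is not a component of $D$ and that $D$ misses the two torus-fixed points lying on the closure of $C$, i.e.\ that $1-y\nmid g$ and that $g$ has nonzero coefficients at $(0,0)$ and at $(p,a)$. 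This is precisely what the column-count argument delivers and what your proposal leaves open.
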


\begin{proof}
  The class of $D$ that is orthogonal to $C$ must have the form
  \[[D] = H' -nE,\]
  where $H'$ corresponds to a triangle $\Delta'$ parallel to $\Delta$ and with width equal to $n$. We take for $\Delta'$ the smallest such triangle that has the left edge with integral vertices and claim that this triangle supports the required curve $D$.

  To prove the claim, we use the same argument as in \cite[Section~2]{GGK3}. Notice that $\Delta'$ contains $n+1$ columns of lattice points. If the count of lattice points in columns is $1,1,2,3,4,\ldots, n$, then $\Delta'$ supports a unique polynomial with vanishing order $n$ at $e$, and whose coefficients at the left and top lattice points are nonzero. This polynomial then defines the curve $D$ that is disjoint from $C$. We will show that this count of lattice points in columns holds after we remove some lattice points in $\Delta'$. The remaining points then support the polynomial defining $D$.

  The vertical line through the lower right vertex divides $\Delta'$ into two sub-triangles. We transform $\Delta'$ by applying a shear transformation to the right sub-triangle, making the right edge with slope $t$ horizontal. This operation does not change the number of lattice points in columns. The result of this shear is a new triangle with horizontal base $[0,n]$, and height $\geq n$. Indeed, if $C^2\leq 0$ then the area of $\Delta'$ is $\geq \frac{n^2}{2}$. Choose a sub-triangle $\Delta''$ in the sheared triangle with the same base $[0,n]$ and height equal to $n$. We consider all lattice points in $\Delta''$ except those on the left edge other than the left vertex. These lattice points have as their row count $n+1,n-1,n-2,\ldots,3,2,1$, and hence they have the correct count in columns. 
\end{proof}

Notice that in the case where the curve $C$ is defined by $1-y$, the curve $C$ passes through the torus-fixed points corresponding to left and top vertices of the triangle. The curve $D$ must then have these vertices in its Newton polygon. This means that if the two triangles have different slopes $s$ of the left edge then the curves $D$ must be defined by different polynomials. Hence, for every rational point in the line $t=K$ we get a different polynomial defining the curve $D$.

The case where the curve $C$ is defined by $1-y$ is also interesting for the following reason. In all other examples listed above, the curve $D$ is defined by a polynomial that also defines a negative curve in a neighbouring diamond. This is not necessarily true in the $1-y$ case. 

\subsection{The non-MDS property}

Proving that a variety $X$ is not a MDS is more difficult than proving that it is a MDS. We expect that all $X$ are non-MDS unless there is a clear reason for them to be a MDS, like the ones listed above. We prove this for the diamonds of $IT(M,N)$ and the upper half of the diamonds of $RT(M,N)$, except for some small values of $M$ and $N$.

\begin{lemma}
  The following points in the diamonds of $IT_K(M,N)$ and $RT_K(M,N)$ have the non-MDS property:
  \begin{enumerate}
  \item When $N>K-2$, all points in the diamond of $IT_K(M,N)$ except those on the horizontal axis and the upper half of the vertical axis.
  \item When $N=K-2$ for $IT_K(M,N)$ and $M+N > 1$ for $RT_K(M,N)$, all points in the upper part of the diamond, $t>K$, except those on the vertical axis.
  \end{enumerate}
\end{lemma}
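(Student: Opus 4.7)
The plan is to assume for contradiction that $X=\Bl_{e}X_{\Delta}$ is a MDS and derive a contradiction using Cutkosky's criterion together with the numerical constraints imposed by $C\cdot D=0$ and the recurrences~\eqref{eqn.recurrence.relations}. Cutkosky's criterion produces an irreducible curve $D$ on $X$ disjoint from $C$, whose class $[D]=\alpha H'-nE$ (relative to a hyperplane class $H'$ attached to a triangle parallel to $\Delta$) is determined up to scalar by $C\cdot D=0$; in turn, this pins down the smallest triangle $\Delta'$ parallel to $\Delta$ that can support the polynomial $\eta$ defining $D$.

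The second step is to localize the problem. By the contrapositive of Proposition~\ref{prop-CD}, if one triangle in a diamond-region is non-MDS then so is every triangle contained in a common parallel enlargement that supports the same $C$. The regions where MDS does hold correspond exactly to directions in which $\Delta$ embeds into a neighbouring diamond's triangle; these are the horizontal axis and the upper half of the vertical axis of the $IT_{K}(M,N)$ diamond, and their analogues for $RT_{K}(M,N)$. Hence it suffices to verify non-MDS at a family of representative points in the complementary directions and then propagate.

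The central and hardest step is to show that no irreducible $\eta$ with the forced Newton polygon shape and vanishing order $n$ at $e$ exists, other than a scalar multiple of $\xi$ itself. I would follow the approach of \cite{GGK1,GGK3}: expand $\eta$ against the recursive basis generated by $\xi_{n,K}^{int}$ and $\xi_{n,K}^{rat}$ via \eqref{eqn.recurrence.relations}, track the leading coefficients $\varepsilon_{n}^{int},\varepsilon_{n}^{rat}$, and use that the Newton polygon of $\eta$ is strictly confined to the prescribed $\Delta'$ to show that the only irreducible possibility forces $\eta\propto\xi$. This contradicts $D\neq C$ (forced by $D\cdot D\geq 0 > C\cdot C$) and yields the contradiction.

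For case (1), where $N>K-2$, the induction on the sequence $(M_{n},N_{n})$ has a clean base case and the dichotomy above forces $\eta\propto\xi$ with no further obstruction. For case (2), the hypotheses $N=K-2$ for $IT_{K}(M,N)$ and $M+N>1$ for $RT_{K}(M,N)$, combined with the restriction to the upper part $t>K$, are necessary precisely because in the complementary region one finds additional diamond intersections that produce genuine MDS examples; the small-value exclusions are exactly the inductive bases that the argument cannot absorb. Executing the induction in the upper half of the $RT_{K}(M,N)$ diamond is the technically most delicate part, since pivoting $\Delta$ upward enlarges it asymmetrically relative to the Newton polygon of $\xi$ and does not reduce directly to a previously handled case; this is where the bulk of the work lies.
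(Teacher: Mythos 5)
Your overall frame (assume MDS, invoke Cutkosky to get an irreducible $D$ with $C\cdot D=0$, pin down the triangle $\Delta'$ supporting $D$, and localize via Proposition~\ref{prop-CD}) matches the paper's setup, but your central step is a placeholder rather than an argument, and the endpoint you aim for is not attainable. You propose to expand the putative polynomial $\eta$ against ``the recursive basis'' generated by the $\xi^{int}_{n},\xi^{rat}_{n}$ and conclude $\eta\propto\xi$; but the recurrences~(\ref{eqn.recurrence.relations}) do not furnish a basis in which to expand an arbitrary section, and $\eta\propto\xi$ cannot be the conclusion in any case, since $[D]$ and $[C]$ are distinct classes with different vanishing orders at $e$ (indeed $D\cdot D\geq 0>C\cdot C$), so the two polynomials could never be proportional. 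The paper's actual mechanism is different: a $D$-curve $\zeta$ for the larger triangle $\Delta_1$ also defines (by the proof of Proposition~\ref{prop-CD}) a $D$-curve for the central triangle $\Delta_0$, which already carries the known orthogonal curve $x^q(1-y)^r$ in the same class; since neither section vanishes on $C$, they are proportional when restricted to $C$, giving the identity $\zeta=cx^q(1-y)^r+\xi g$. The contradiction is then extracted from $g$: the curve it defines meets $C$ negatively, forcing $\xi$ to divide $g$, and that divisibility is ruled out by restricting the identity to an edge of $\Delta_0$.

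That last step is where all the substance of the lemma lives, and your proposal does not touch it. For the upper half of the diamonds one restricts to the left edge, where $\zeta-cx^q(1-y)^r$ has exactly two nonzero terms, so all its multiple roots are zero while $\xi$ has a nonzero root. For the lower half of the $IT$ diamonds (the genuinely new case in this paper) one restricts to the right edge, where $\zeta-cx^q(1-y)^r$ becomes $dz^p-c$, all of whose roots have equal absolute value; one must then prove that $\xi$ restricted to that edge has roots of \emph{different} absolute values. This is done via Lemma~\ref{lemma.same.norm} (equal moduli would force $|a_n|=|b_n|$) together with the explicit bounds $|a_{2n}|=|a_{2n-1}|\leq n$ versus $|b_n|=F_{n-1}+F_{n-2}$ of Lemma~\ref{lemma.bound.an}, computed from the recurrences --- and this computation is also the source of the excluded small values of $(M,N)$ in the statement, which your sketch attributes vaguely to ``inductive bases.'' Without a substitute for the comparison against the known curve $x^q(1-y)^r$ and the edge-restriction/root-modulus argument, the proof does not close.
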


\begin{proof}
  These results were proved in \cite[Lemma 5.5]{GGK3} for the upper left quadrant of the diamonds. The same proof works also for the upper right quadrant. We will recall the main steps of the proof and then use a similar argument for the lower half of the diamond in the integral case.

  Let $(s_0,t_0=K)$ be the coordinates of the triangle $IT_K(M,N)$ or $RT_K(M,N)$, and let $(s_1,t_1>K)$ be a point in the upper half of the diamond. We let the corresponding triangles that support the same negative curve $C$ be $\Delta_0$ an $\Delta_1$. Then $\Delta_0 \subseteq \Delta_1$. The proof of Proposition~\ref{prop-CD} shows that if a polynomial $\zeta$ defines the curve $D$ for $\Delta_1$, then the same polynomial also defines the curve $D$ for $\Delta_0$. However, we already know several curves $D$ for $\Delta_0$, for example the curve defined by $1-y$. For suitable integers $q,r$, the polynomials $\zeta$ and $x^q(1-y)^r$ lie in the same class and hence are sections of the same line bundle. Since these sections do not vanish when restricted to the curve $C \subseteq \Bl_e X_{\Delta_0}$, one is a constant multiple of the other when restricted to $C$. Let $\xi$ be the polynomial defining $C$. Then we can write 
  \[ \zeta = cx^q(1-y)^r + \xi g\]
  for a suitable constant $c$ and polynomial $g$.    
  The contradiction to the existence of $\zeta$ comes from checking the class of the curve $\overline{D}$ defined by $g$.  
  Assuming that $t_1>t_0$, this curve $\overline{D}$ intersects $C$ negatively in $\Bl_e X_{\Delta_0}$ by the argument in \cite[Lemma 5.3 and Lemma 5.5]{GGK3}, which is stated for the upper left quadrant of the diamond but runs word-by-word also for the upper right quadrant. 
  Hence, $D$ must have $C$ as a component, which means that $\xi$ divides $g$. However, this can be ruled out by restricting the polynomials to the left edge of the triangle $\Delta_0$, and showing that the restriction of $\zeta - cx^q(1-y)^r$ is not divisible by the restriction of $\xi^2$. 
  Indeed, assuming that $s_1\neq s_0$, $\zeta - cx^q(1-y)^r$ is supported at two lattice points on the left edge. The multiple roots of a single variable polynomial that contains exactly two nonzero terms must all be zero, but the roots of $\xi$ are not all zero because $\xi$ also has two nonzero terms on the left edge. 
   

  We will now use a similar argument for the lower half of the $IT(M,N)$ diamonds. We let $\Delta_0, \Delta_1$ be as before, with $(s_1,t_1<K)$ now in the lower half of the diamond. Then in the integral case $\Delta_0\subseteq \Delta_1$. (This is no longer true for rational triangles and that is why we cannot say anything about the lower half of these diamonds.) Assuming that there is a curve $D$ for $\Delta_1$ defined by a polynomial $\zeta$, we get two such curves $D$ for the triangle $\Delta_0$, defined by $\zeta$ and $1-y$. As before, these must satisfy an equation
  \[ \zeta = cx^q (1-y)^r + \xi g\]
  for a constant $c$ and a polynomial $g$. 
  We can assume that $\zeta$ has rational coefficients, since defining a (possibly reducible) $D$ curve is characterized by the coefficients of $\zeta$ satisfying a system of homogeneous linear equations with rational coefficients, so if there is a nonzero solution over $k$ there is one over $\mathbb{Q}$.  Since the monomial $x^q$ has a zero coefficient in $\xi g$, we deduce that in this case $c \in \mathbb{Q}$. 
  We get a contradiction by showing that the difference $\zeta - cx^q(1-y)^r$ is not divisible by $\xi$. This can be detected already when we restrict the polynomials to the right edge of the triangle $\Delta_0$. These restrictions lie in the ring $\mathbb{Q}[z]$, where we let the lower right vertex correspond to $1$, the next lattice point to $z$, and so on. The restriction of $\zeta - cx^q (1-y)^r$ has the form $dz^p - c$ for some integer $p>0$ and some $d \in \mathbb{Q}$. Notice that all roots of this polynomial have the same absolute value. Now it suffices to show that $\xi$ has roots with different absolute values.

  To help the flow of this argument we postpone some computational details to Lemma~\ref{lemma.same.norm} and Lemma~\ref{lemma.bound.an} below. Let $(M_{n},N_{n})=(F_{n+1},F_{n})$ be as defined in Subsection~\ref{subsection-IT-RT}. We will show in the proof of Lemma~\ref{lemma.bound.an} that the polynomial $\xi=\xi^{int}_{M,N}$, when restricted to the right edge of the triangle, has the form 
  \[
    \pm 1 + a_nz + \ldots + b_n z^{N-1} \pm z^N.
  \] 
  By Lemma~\ref{lemma.same.norm}, if all (complex) roots of $\xi$ have the same absolute value, then $|a_n|=|b_n|$. The coefficient $b_n$ was computed in \cite[Lemma 6.3]{GGK3} and we will bound the coefficient $a_n$ in Lemma~\ref{lemma.bound.an}. Then, these two coefficients are as follows.

  First assume that $M>N$, so that $(M,N) = (M_n,N_n)=(F_{n+1}, F_n)$. Then,
  \[
    |b_n| = F_{n-1}+F_{n-2}, \quad |a_{2n}| = |a_{2n-1}| \leq n.
  \]
  For $K\geq 4$ and $n\geq 3$ this implies $|b_n|>|a_n|$. 

  The case $M<N$ corresponds to $(M,N) = (N_n, M_n)=(F_{n}, F_{n+1})$. In this case
  \[
    |b_n| = F_{n}+F_{n-1}, \quad |a_{2n+1}| = |a_{2n}| \leq n+1.
  \]
  Now, for $K\geq 4$ and $n\geq 2$ we have $|b_n|>|a_n|$.
  %
  %
\end{proof} 

To summarize, the only cases where we don't know about the MDS property for $IT_K(M,N)$ and $RT_K(M,N)$ are (assume $m>1$):
\begin{enumerate}
\item The lower half, $t< K$, of the diamonds of $RT_{K}(M,N)$.
\item The lower half of the diamonds of the first integral triangles $IT_K(M_2, N_2) = IT_K((K-2)^2-1, K-2)$ and $IT_K(N_1, M_1)= IT_K(K-2, 1)$. For example, $IT_4(3,2)$, $IT_4(1,2)$. We know that at least the vertical axis in these diamonds consists of MDS points. 
\item The diamonds of the first rational triangles $RT_K(M_1, N_1) = RT_K(K-2,1)$. (These are the same as the diamonds $IT_{K-1}(M_1,N_1)$). For example, the diamond of $RT_4(2,1)$ and $IT_3(1,1)$ that is shown in Figure~\ref{fig-m2}.
\end{enumerate}

\begin{lemma}    \label{lemma.same.norm}
  Consider a real polynomial of the form 
  \[ \xi = \pm 1 + az + \ldots + b z^{N-1} \pm z^N.\]
  If all roots of the polynomial have the same absolute value then $|a|=|b|$.
\end{lemma}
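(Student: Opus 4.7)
The plan is to exploit Vieta's formulas together with the observation that roots of the same absolute value force that common absolute value to be $1$, which then makes inversion the same as complex conjugation. First I would use the fact that the leading coefficient and the constant term of $\xi$ are both $\pm 1$ to pin down the common modulus of the roots. Indeed, if $r_1,\ldots,r_N \in \CC$ are the roots and $|r_i| = R$ for all $i$, then by Vieta
\[
R^N = \left|\prod_i r_i\right| = \left|\tfrac{\pm 1}{\pm 1}\right| = 1,
\]
so $R = 1$ and every root lies on the unit circle.

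Next I would express $|a|$ and $|b|$ as absolute values of elementary symmetric functions of the roots. Vieta gives $|b| = |e_1(r_1,\ldots,r_N)|$ and $|a| = |e_{N-1}(r_1,\ldots,r_N)|$, with the ambiguity in the leading sign $\pm 1$ irrelevant after taking moduli. Since each root satisfies $|r_i|=1$, we have $1/r_i = \overline{r_i}$, and since $\prod_i r_i = \pm 1$,
\[
e_{N-1} = \left(\prod_i r_i\right)\sum_i \frac{1}{r_i} = (\pm 1)\sum_i \overline{r_i}.
\]
Therefore $|e_{N-1}| = |\overline{e_1}| = |e_1|$, which yields $|a|=|b|$.

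There is not really a substantive obstacle here: once the unit-circle reduction is in place, the identity $1/r_i = \overline{r_i}$ collapses the two elementary symmetric functions to complex conjugates of one another. The only bookkeeping required is tracking the signs $\epsilon_0, \epsilon_N \in \{\pm 1\}$ of the constant and leading terms, and those drop out as soon as we pass to absolute values.
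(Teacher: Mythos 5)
Your proof is correct and follows essentially the same route as the paper: both arguments first use the $\pm 1$ constant and leading coefficients to force all roots onto the unit circle, and then identify $|a|$ with $\left|\sum_i 1/r_i\right| = \left|\sum_i \overline{r_i}\right| = \left|\sum_i r_i\right| = |b|$. Your version just spells out the Vieta bookkeeping a bit more explicitly.
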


\begin{proof}
  Since the highest and lowest degree terms have coefficients $\pm 1$, if all roots have the same absolute value, this absolute value must be $1$. Let $\alpha_i$ be the (complex) roots of the polynomial. Then
  \[ |a| = \left|\sum \frac{1}{\alpha_i}\right| = \left|\sum \overline{\alpha_i}\right| = \left|\sum \alpha_i\right| = |b|.\]
\end{proof} 

Let us now bound the coefficient $a$ in $\xi^{int}$. Fix $K\geq 4$ and consider $M>N$, $(M,N) = (M_n,N_n)$. We are interested in the polynomial  $\xi_n^{int}$ restricted to the right edge of the triangle. The isomorphism of triangles $IT(M,N)\isom IT(N,M)$ in \cite[Lemma 3.3]{GGK3} exchanges the lower edges. Hence, $\xi$ restricted to the right edge in the case $M<N$ is equal to $\xi$ restricted to the bottom edge in the case $M>N$. To cover both cases, it suffices to study the coefficients of $\xi_n^{int}$ restricted to the lower two edges. 

Denote the terms in $\xi_n^{int}$ supported on the lower two edges of the triangle as
\[ \xi_n^{int} = 1+ \ldots + a'_n x^{M-1} + \delta_n x^M + a_n x^{M+1} y^K + \cdots \pm x^{M+N} y^{NK}.\]
If we restrict this polynomial to the right edge we get $\delta_n + a_n z +\ldots \pm z^N$, where we set $z^j=x^{M+j} y^{jK}$. If we restrict the polynomial to the base of the triangle we get $z^M+ \cdots + a'_n z + \delta_n$, where we set $z^j = x^{M-j}$. 
The coefficients $a_n$ and $a_n'$ are the ones we need to bound.

\begin{lemma}      \label{lemma.bound.an}
  With notation as above, 
  \[|a_{2n}| = |a_{2n-1}| \leq n, \quad |a'_{2n+1}| = |a'_{2n}| \leq n+1. \]
\end{lemma}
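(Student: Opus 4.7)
The plan is to prove both bounds by induction on $n$ using the recurrence relations~\eqref{eqn.recurrence.relations}. The starting observation is a column-localization principle: the correction term $\varepsilon^{rat}_n x^{M_n}(y-1)^{M_n+N_n}$ in the first recurrence is supported entirely in the single column $x=M_n$. Since $a_n$ is the coefficient of $\xi^{int}_n$ at $(M_n+1,K)$ and $a'_n$ is its coefficient at $(M_n-1,0)$, both live in different columns and are unaffected by the correction. Hence $a_n$ and $a'_n$ equal the corresponding coefficients of the product $\xi^{rat}_n\,\xi^{rat}_{n-1}$.

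The second step is to enumerate the lattice-point pairs $(\alpha,\beta)$ in the supports of $\xi^{rat}_n$ and $\xi^{rat}_{n-1}$ with $\alpha+\beta=(M_n+1,K)$ or $(M_n-1,0)$. Using the explicit vertices $(0,0)$, $(M_i-(M_i+N_i)/K,0)$, $(M_i,M_i+N_i)$ of $RT(M_i,N_i)$ and the recurrence $M_n=(K-2)M_{n-1}-M_{n-2}$, I expect a very short list of such decompositions (in simple cases, a single pair of the form $\alpha=(2,1),\beta=(M_{n-1},M_{n-1}+N_{n-1})$ for $a_n$, and $\alpha=\beta=(1,0)$ for $a'_n$). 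Combined with the $\pm 1$ coefficients at the vertices of $\xi^{rat}_i$ from~\cite[Paragraph 6.2]{GGK3}, this expresses $a_n$ and $a'_n$ as a short sum of coefficients of $\xi^{rat}_n$ and $\xi^{rat}_{n-1}$ at lattice points immediately adjacent to their vertices.

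One then derives an analogous recurrence for those adjacent-to-vertex coefficients of $\xi^{rat}_n$ by applying the same column-localization principle to the second identity in~\eqref{eqn.recurrence.relations}: the correction term $\varepsilon^{int}_n x^{M_n+N_n}(y-1)^{KN_n}$ again lives in a single column, and expanding $(\xi^{rat}_{n-1})^K$ by the multinomial theorem near its top vertex produces explicit coefficients $\pm K, \pm\binom{K}{2},\dots$. The previously extracted near-vertex data of $\xi^{int}_n,\xi^{int}_{n-1}$ on the other side of the identity close the loop. A straightforward estimate on this recurrence shows that each step increases the absolute value of the near-vertex coefficient by at most one, giving linear growth in $n$ and, after the reduction above, the bounds $|a_{2n}|=|a_{2n-1}|\le n$ and $|a'_{2n+1}|=|a'_{2n}|\le n+1$.

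The main obstacle will be the sign bookkeeping: $\varepsilon^{rat}_n$ and $\varepsilon^{int}_n$ depend on the parity of $n$ (and on $n \pmod 3$ when $K$ is odd), so the induction bifurcates into cases. The equalities $|a_{2n}|=|a_{2n-1}|$ and $|a'_{2n+1}|=|a'_{2n}|$ should emerge from the observation that within each consecutive pair of recurrence steps exactly one step contributes a new $\pm 1$ to the near-vertex coefficient, while the other merely permutes signs --- a pattern that needs to be verified on each sign case but once established immediately yields the claimed linear bound.
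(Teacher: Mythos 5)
Your plan is essentially the paper's proof: the paper likewise exploits that the correction terms $\varepsilon^{rat}_{n} x^{M_n}(y-1)^{M_n+N_n}$ and $\varepsilon^{int}_{n} x^{M_n+N_n}(y-1)^{KN_n}$ live in a single column, reads off the near-vertex coefficients by restricting to the right edge (for $a_n$) and to the base (for $a'_n$), and obtains the coupled recurrences $c_nc_{n-1}=a_n$ and $\delta_n a_{n-1}+\delta_{n-1}a_n = c_{n-1}^K$ or $0$ according to the parity of $n$, from which $|a_{2n}|=|a_{2n-1}|$ and $|a_{2n+1}|\le |a_{2n}|+1$ follow. So the strategy is sound and matches the paper; what needs fixing are the concrete details you commit to, which sit exactly where the content of the proof lives.

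First, the decisive expansion of $\bigl(\xi^{rat}_{n-1}\bigr)^K$ is at the \emph{bottom-right} vertex of $RT(M_{n-1},N_{n-1})$, not the top vertex. That vertex is rational, so the restriction of $\xi^{rat}_{n-1}$ to the right edge has no constant term and starts at fractional height $1/K$ or $(K-1)/K$ depending on the parity of $n-1$ (this is \cite[Lemma 3.1]{GGK3}); consequently $\bigl(\xi^{rat}_{n-1}\bigr)^K$ contributes $c_{n-1}^K z$ to the coefficient of $z$ in one parity and nothing at all in the other. This dichotomy is the engine of both the equality $|a_{2n}|=|a_{2n-1}|$ and the ``$+1$ every two steps'' growth; a multinomial expansion at the top vertex computes the $b_n$'s instead. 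Second, the unique decompositions realizing $a_n$ and $a'_n$ as products are (lowest lattice point on the right edge of $RT(M_n,N_n)$) plus (lowest lattice point on the right edge of $RT(M_{n-1},N_{n-1})$), at heights $1$ and $K-1$ summing to $K$ (and the analogous pair of rightmost base points for $a'_n$) --- not the points you name. Third, you need $\delta_n=\pm1$ for the coefficient of $\xi^{int}_n$ at its right vertex; this is \emph{not} covered by your column-localization (that vertex lies exactly in the exceptional column, and the product $\xi^{rat}_n\xi^{rat}_{n-1}$ misses it, so $\delta_n$ comes entirely from the correction term). Without $\delta_n=\pm1$ the recurrence $\delta_na_{n-1}+\delta_{n-1}a_n=\cdots$ would not yield $|a_{2n}|=|a_{2n-1}|$. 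These are all repairable, and once repaired your argument coincides with the paper's.
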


\begin{proof}
  We will use the recurrence relations (\ref{eqn.recurrence.relations}). Let us start by showing that the coefficient of $x^M$ in $\xi_n^{int}$ (corresponding to the right vertex) is $\delta_n=\pm 1$. In the first relation 
  \[ \xi^{int}_{n} = \xi^{rat}_{n} \xi^{rat}_{n-1} -\varepsilon^{rat}_{n} x^{M_n}(y-1)^{M_n+N_n},\]
  the terms $\xi^{rat}_{n} \xi^{rat}_{n-1}$ do not involve the right vertex of the triangle. Hence, the coefficient of $x^M$ is 
  \[ -\varepsilon^{rat}_{n} (-1)^{M_n+N_n} = \pm 1.\]

  Let us now consider restriction of the polynomials to the right edge of the triangle. These restrictions will be in the ring $k[z]$, where the constant $1$ corresponds to the right vertex and $z$ corresponds to the next lattice point $(M+1, K)$ on the edge. Then we may write the restriction of $\xi^{int}_{n}$ as 
  \[ \pm 1 + a_nz + \cdots \pm z^{N}.\]
  We will also need the restrictions of $\xi^{rat}_{n}$ to the right edge of their triangles. Note that these triangles have the right vertex rational, hence there is no constant term in the polynomials.  
 By \cite[Lemma 3.1]{GGK3}, for $n$ even the first lattice point has $y$-coordinate $1$, and for $n$ odd the first lattice point has $y$-coordinate $K-1$. 
 We will write the restriction of $\xi^{rat}_{n}$ as $c_n z^{\frac{1}{K}} + \cdots$ in the case of even $n$ and $ c_n z^{\frac{K-1}{K}} + \cdots$ in the case of odd $n$. 

  Considering the coefficients of $z$ in the two recurrence relations in (\ref{eqn.recurrence.relations}), we get
  \begin{align*}
    \delta_n a_{n-1} + \delta_{n-1} a_n &= \begin{cases} c_{n-1}^K & \text{if $n$ is odd,} \\ 0 & \text{if $n$ is even,} 
    \end{cases}\\
    c_n c_{n-1} &= a_n.
  \end{align*}
  The two cases in the first equation come from the fact that $(\xi^{rat}_{n-1})^K$ involves the monomial $z$ only in the case where $n$ is odd. The initial values are $a_0=0$, $c_0=-1$.

  The first equation implies that $a_{2n} = \pm a_{2n-1}$. From the second equation we can expand
  \[ c_n = \frac{a_n}{c_{n-1}} = \frac{a_n c_{n-2}}{ a_{n-1}} = \cdots.\]
  Using the equality $a_{2n} = \pm a_{2n-1}$ from the first equation and the initial value $c_0=-1$, we see that 
  \[ |c_{2n}| = \left|\frac{a_{2n}}{a_{2n-1}}\right| \cdots \left|\frac{a_2}{a_1}\right| = 1\quad\text{and}\quad |c_{2n+1}| = | a_{2n+1}|.\] 
  The inequality from the first relation
  \[ |a_{2n+1}| = | \pm a_{2n} \pm c_{2n}^K| = | \pm a_{2n} \pm 1| \leq |a_{2n}| + 1\]
  now proves the claim for $a_n$. 

  The proof for $a'_{n}$ is very similar. We now restrict the polynomials to the base of the triangles. These restrictions lie in $k[z]$ where we denote by $1$ the monomial corresponding to the right vertex and by $z$ the monomial corresponding to the next lattice point. Then the restriction of $\xi^{int}_{n}$ is
  \[ \pm 1 + a'_nz + \cdots + z^{M}.\]
  The restriction of $\xi^{rat}_{n}$ is $c'_n z^{\frac{K-1}{K}} + \cdots$ in the case of even $n$ and $ c'_n z^{\frac{1}{K}} + \cdots$ in the case of odd $n$. (This is because in the even case the first lattice point on the bottom edge is distance $(K-1)/K$ from the right vertex in the even case, and distance $1/K$ in the odd case.)

  We again consider the coefficient of $z$ in the recurrence relations (\ref{eqn.recurrence.relations}). These give
  \begin{align*}
    \delta_n a'_{n-1} + \delta_{n-1} a'_n &= \begin{cases} (c'_{n-1})^K & \text{if $n$ is even,} \\ 0 & \text{if $n$ is odd,} 
    \end{cases}\\
    c'_n c'_{n-1} &= a'_n.
  \end{align*}
  The initial conditions are now $a'_0 = -1$, $c'_0= 1$. As before, we compute 
  \[ a'_{2n+1} = \pm a'_{2n}, \quad c'_{2n+1} = \pm 1, \quad c'_{2n} = \pm a'_{2n}, \quad |a'_{2n}| \leq |a'_{2n-1}|+1.\]
  This proves the claim.
\end{proof}


\begin{thebibliography}{10}

\bibitem{Castryck12}
Wouter Castryck.
\newblock Moving out the edges of a lattice polygon.
\newblock {\em Discrete \& Computational Geometry}, 47(3):496--518, 2012.

\bibitem{Cutkosky}
Steven~Dale Cutkosky.
\newblock Symbolic algebras of monomial primes.
\newblock {\em J. Reine Angew. Math.}, 416:71--89, 1991.

\bibitem{GGK2}
Javier Gonz\'{a}lez~Anaya, Jos\'{e}~Luis Gonz\'{a}lez, and Kalle Karu.
\newblock Constructing non-{M}ori {D}ream {S}paces from negative curves.
\newblock {\em J. Algebra}, 539:118--137, 2019.

\bibitem{GGK1}
Javier Gonz\'{a}lez~Anaya, Jos\'{e}~Luis Gonz\'{a}lez, and Kalle Karu.
\newblock On a family of negative curves.
\newblock {\em J. Pure Appl. Algebra}, 223(11):4871--4887, 2019.

\bibitem{GGK3}
Javier Gonz{\'a}lez-Anaya, Jos{\'e}~Luis Gonz{\'a}lez, and Kalle Karu.
\newblock Curves generating extremal rays in blowups of weighted projective
  planes.
\newblock {\em arXiv preprint arXiv:2002.07123}, 2020.

\bibitem{Huneke}
Craig Huneke.
\newblock Hilbert functions and symbolic powers.
\newblock {\em Michigan Math. J.}, 34(2):293--318, 1987.

\bibitem{Kurano21}
Kazuhiko Kurano.
\newblock Equations of negative curves of blow-ups of {E}hrhart rings of
  rational convex polygons.
\newblock {\em arXiv preprint arXiv:2101.02448}, 2021.

\bibitem{KuranoMatsuoka}
Kazuhiko Kurano and Naoyuki Matsuoka.
\newblock On finite generation of symbolic {R}ees rings of space monomial
  curves and existence of negative curves.
\newblock {\em J. Algebra}, 322(9):3268--3290, 2009.

\bibitem{MRSS}
David McKinnon, Rindra Razafy, Matthew Satriano, and Yuxuan Sun.
\newblock On curves with high multiplicity on $\mathbb{P}(a, b, c)$ for
  $\operatorname{min}(a, b, c)\leq 4$.
\newblock {\em arXiv preprint arXiv:2011.10103}, 2020.

\bibitem{Srinivasan}
Hema Srinivasan.
\newblock On finite generation of symbolic algebras of monomial primes.
\newblock {\em Comm. Algebra}, 19(9):2557--2564, 1991.

\end{thebibliography}

\end{document}